\documentclass[11pt]{article}

\usepackage{latexsym,color,amsmath,amsthm,amssymb,amscd,amsfonts}
\usepackage{graphicx}
\usepackage{cancel}

\setlength{\topmargin}{-0.45 in}     
\setlength{\oddsidemargin}{0.3in}  
\setlength{\evensidemargin}{0.3in} 
\setlength{\textheight}{9in}
\setlength{\textwidth}{6.1in} 
\setlength{\footskip}{0.55in}  

\newtheorem{conj}{Conjecture}[section]
\newtheorem{theorem}[conj]{Theorem}

\newtheorem{remark}[conj]{Remark}
\newtheorem{lemma}[conj]{Lemma}
\newtheorem{prop}[conj]{Proposition}

\newcommand\independent{\protect\mathpalette{\protect\independent}{\perp}} 
\def\independent#1#2{\mathrel{\rlap{$#1#2$}\mkern2mu{#1#2}}}

\newcommand{\R}{\mathbb{R}}

\renewcommand{\P}{\mathbb{P}}

\newcommand{\E}{\mathbb{E}}

\DeclareMathOperator{\Var}{Var}




\newcommand{\vol}{\mathrm{Vol}}

\newcommand{\eps}{\varepsilon}

\date{\vspace{-5ex}}

\author{Arnaud Marsiglietti and Puja Pandey}

\title{On the Equivalence of Statistical Distances for Isotropic Convex Measures}

\begin{document}

\maketitle

\begin{abstract}
    
    We establish quantitative comparisons between classical distances for probability distributions belonging to the class of convex probability measures. Distances include total variation distance, Wasserstein distance, Kullback-Leibler distance and more general R\'enyi divergences. This extends a result of Meckes and Meckes (2014).
    
\end{abstract}


\section{Introduction}

In convex geometry and its probabilistic aspects, many fundamental inequalities are shown to be reversed up to universal constants in the presence of geometric properties, such as convexity. Examples include reverse H\"older and Jensen type inequalities (see, e.g., \cite{FP}, \cite{Fa}, \cite{Be}, \cite{KPB}, \cite{L}, \cite{Bor73}, \cite{H}), reverse isoperimetric inequalities (see, e.g., \cite{Ba}, \cite{Bar}, \cite{BH}), and reverse Brunn-Minkowski inequalities (see, e.g., \cite{M1}, \cite{M2}, \cite{P}, \cite{BM}). Long-standing conjectures, such as the Mahler conjecture \cite{Mal} and Bourgain's hyperplane conjecture \cite{Bou} also are related to the reversal of fundamental inequalities.

Another important example is the equivalence of distances between probability distributions established by Meckes and Meckes \cite{MM}, who showed that under a log-concavity assumption, many classical distances are comparable. The goal of this article is to extend their results to a broader class of probability measures, called convex measures.

The class of convex measures contains fundamental distributions in probability and statistics. Examples include Gaussian distributions, uniform distributions on a convex set and more general log-concave distributions, as well as heavy tailed distributions such as Cauchy type of the form
$$ f(x) = \frac{C}{(1 + |x|^2)^{\frac{n+\beta}{2}}}, \quad x \in \R^n, $$
where $\beta > 0$ is a parameter, $C > 0$ is the normalizing constant, and $|\cdot|$ denotes the Euclidean norm on $\R^n$, $n \geq 1$.

The classical distances between probability distributions we consider are the bounded Lipschitz distance, the total variation distance, the Wasserstain distance, the relative entropy and more general R\'enyi and Tsallis divergences. More precisely, given probability measures $\mu$ and $\nu$ on $\R^n$, the bounded Lipschitz distance between $\mu$ and $\nu$ is defined as
$$ d_{BL}(\mu,\nu) = \sup_{\|g\|_{BL} \leq 1} \left| \int g \, d\mu - \int g \, d\nu  \right|, $$
where for a function $g \colon \R^n \to \R$,
$$ \|g\|_{BL} = \max \left\{ \|g\|_{\infty}, \, \sup_{x \neq y} \frac{|g(x)-g(y)|}{|x-y|} \right\}. $$
The total variation distance between $\mu$ and $\nu$ is defined as
$$ d_{TV}(\mu, \nu) = 2 \sup_{A \subset \R^n} |\mu(A) - \nu(A)|. $$
The $p$-th Wasserstein distance, $p \geq 1$, between $\mu$ and $\nu$ is defined as
$$ W_p(\mu,\nu) = \inf_{(X,Y) \, : \, X \sim \mu, Y \sim \nu} \E[|X-Y|^p]^{\frac{1}{p}}, $$
where the infimum is taken over all joint random variables $(X,Y)$ with marginal $X$ (resp. $Y$) distributed according to $\mu$ (resp. $\nu$). The R\'enyi divergence of order $p > 0$ between a measure $\mu$ with density $f$ (with respect to Lebesgue measure on $\R^n$) and $\nu$ with density $g$ is defined as
$$ D_{p}(\mu || \nu) = \frac{1}{p - 1} \log \left( \int \left(\frac{f(x)}{g(x)}\right)^{p} g(x) dx \right). $$
This family of distances includes the relative entropy (or Kullback-Leibler distance)
$$ D_{1}(\mu || \nu) = D(\mu || \nu) = \int f(x) \log \left( \frac{f(x)}{g(x)} \right) dx, $$
and is related to the family of Tsallis entropies
$$ T_{p}(\mu || \nu) = \frac{1}{p - 1} (e^{(p - 1) D_{p}(\mu || \nu)} - 1). $$
There are known relationships between these distances. For example,
$$ d_{BL}(\mu, \nu) \leq \min\{ d_{TV}(\mu,\nu), W_1(\mu,\nu)\}, $$
which follows from a dual representation of the total variation distance and Wasserstein distance (see, e.g., \cite{V}, \cite{MM}), and 
$$ W_p(\mu,\nu) \leq W_q(\mu,\nu) $$
for all $p \leq q$, by H\"older's inequality. As for the entropic quantities, Gilardoni \cite{Gil} proved that for all $p \in (0,1]$,
$$ \frac{p}{2} d_{TV}(\mu,\nu)^2 \leq D_{p}(\mu || \nu), $$
which extends a result of Pinsker \cite{Pin} and Csisz\'ar \cite{C}. When $\nu = \gamma_n$, the standard Gaussian measure in $\R^n$, Talagrand \cite{Tal} proved that
$$ W_2^2(\mu,\gamma_n) \leq 2 D(\mu || \gamma_n). $$
It turns out that R\'enyi divergences are comparable in the range $(0,1)$. For all $0 < p < q < 1$,
$$ \frac{p (1-q)}{(1-p)^2} D_{q}(\mu || \nu) \leq D_{p}(\mu || \nu) \leq D_{q}(\mu || \nu), $$
see, e.g., \cite{EH}, \cite{BCG}. The case $p > 1$ is more intricate. For example, consider a one-dimensional exponential distribution $\mu$, then, for any $p > 1$,
$$ D_{p}(\mu,\gamma_1) = +\infty, $$
while $D(\mu,\gamma_1) < +\infty$. Hence, even among log-concave distributions, there may not be an absolute comparison between R\'enyi entropies of order $p \geq 1$. Additional assumptions are thus necessary. Nevertheless, for all $0 < p \leq q$,
$$ D_{p}(\mu || \nu) \leq D_{q}(\mu || \nu), $$
and similarly for $T_{p}$ (see \cite{BCG}). Moreover, one clearly has
$$ D_{p}(\mu || \nu) \leq T_{p}(\mu || \nu). $$

Classical counterexamples show that, in general, the above inequalities cannot be reversed, and that there are no comparison between the total variation distance and the Wasserstein distance. The goal of this article is to show that all of the above distances are equivalent when restricted to the class of convex measures. In Section 2, we recall the definition and the main properties of convex measures. Section 3, which contains our main results, establishes a quantitative comparison between all aforementioned distances within the class of isotropic convex measures.

\section{Preliminaries on convex measures}

\subsection{Definition}

For a parameter $\alpha \in [-\infty,+\infty]$, for real numbers $a,b \geq 0$, and $\lambda \in [0,1]$, denote
\[ M_{\alpha}^{\lambda}(a,b) = \left\{
\begin{array}{llll} \left( (1-\lambda)a^{\alpha} + \lambda b^{\alpha} \right)^{\frac{1}{\alpha}} & \mbox{ if $\alpha \notin \{-\infty,0,+\infty\}$} \\
\min(a,b) & \mbox{ if $\alpha=-\infty$} \\ a^{1-\lambda} b^{\lambda} & \mbox{ if $\alpha=0$} \\ \max(a,b) & \mbox{ if $\alpha=+\infty$}
\end{array}.
\right. \]
Recall that a random variable $X$ in $\R^n$ with distribution $\mu$ is {$s$-concave}, $s \in [-\infty,+\infty]$, if for all $\lambda \in [0,1]$, for all compact sets $A,B \subset \R^n$ such that $\mu(A)\mu(B)>0$, one has
\begin{eqnarray}\label{s-concave}
\mu((1-\lambda)A + \lambda B) \geq  M_s^{\lambda}(\mu(A),\mu(B)).
\end{eqnarray}
The parameter $s$ is understood as a convexity parameter. From the definition, one can see by Jensen's inequality that any $s$-concave measure is $r$-concave for all $r \leq s$. In particular, any $s$-concave measure is $-\infty$-concave. The class of $-\infty$-concave measures is called convex measures, and the class of $0$-concave measures is called log-concave measures. A function $f \colon \R^n \to [0,+\infty)$ is {$\kappa$-concave}, $\kappa \in [-\infty,+\infty]$, if for all $\lambda \in [0,1]$, for all $x,y \in \R^n$ such that $f(x)f(y)>0$, one has
\begin{eqnarray}
f((1-\lambda)x + \lambda y) \geq M_{\kappa}^{\lambda}(f(x),f(y)).
\end{eqnarray}
The class of convex measures has been extensively studied by Borell in \cite{Bor74}, \cite{Bor75}. In particular, Borell proved that if $X$ is not supported on a proper affine subspace of $\R^n$, then $X$ is $s$-concave, with $s \in [-\infty, \frac{1}{n}]$, if and only if $X$ admits a density $f$ with respect to Lebesgue measure on $\R^n$, which is $\kappa$-concave, with $\kappa \in [-\frac{1}{n}, +\infty]$ satisfying the relation $\kappa = \frac{s}{1-sn}$. Moreover, if $X$ is $s$-concave, then the random variable $\langle X, \theta \rangle$ is also $s$-concave, for all $\theta \in S^{n-1}$.

\subsection{Concentration inequalities}

Recall that a random variable $X$ in $\R^n$ is isotropic if $X$ is centered and if for all $\theta \in S^{n-1}$,
$$ \E[\langle X, \theta \rangle^2] = 1. $$

The next two lemmas provide concentration and moments inequalities for $s$-concave measures, and were established in \cite{AGLL}. First, recall that if $X$ is $s$-concave, then $\E[|X|^p]< +\infty$ for all $s > -\frac{1}{p}$ (see \cite{Bor74}), where $|\cdot|$ denotes the Euclidean norm in $\R^n$.

\begin{lemma}\textnormal{(\cite[Corollary 5.4]{AGLL})}\label{deviation}
Let $-\frac{1}{2} < s < 0$. Let $X$ be an isotropic $s$-concave random variable in $\R^n$. Then, for all $u>0$,
$$ \P(|X| \geq u) \leq \left( \frac{c \max\{\sqrt{n}, \frac{1}{|s|}\}}{u} \right)^{\frac{1}{2|s|}}. $$
In particular, if $s \geq -\frac{1}{2\sqrt{n}}$, then for every $6c \sqrt{n} \leq u \leq \frac{3c}{|s|}$,
$$ \P(|X| \geq u) \leq e^{-c_0 u}, $$
where $c$ and $c_0$ are universal positive constants.

\end{lemma}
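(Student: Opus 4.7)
The plan is to reduce the tail bound to a Paouris-type moment estimate for $s$-concave measures and then apply Markov's inequality, optimizing the moment order to recover the exponential regime in the prescribed range of $u$.

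By the Borell characterization recalled above, since $s\in(-1/2,0)$ the random vector $X$ has a density $f$ which is $\kappa$-concave with $\kappa = s/(1-sn)\in(-1/n,0)$; this weak concavity of $f$ is the structural input for all subsequent moment estimates. The heart of the argument is a Paouris-type inequality
\begin{equation*}
\E[|X|^p]^{1/p} \leq c\bigl(\sqrt{n} + p\bigr), \qquad 0 < p \leq \tfrac{1}{2|s|},
\end{equation*}
valid for isotropic $s$-concave $X$. The upper threshold $p \leq 1/(2|s|)$ is essential: beyond the critical order $\sim 1/|s|$, the polynomial tails of a generic $s$-concave measure make higher moments diverge. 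The establishment of this bound would proceed via an $L_q$-centroid body argument adapted to the $s$-concave setting, together with a careful $\theta$-regularity estimate tracking the role of $|s|$ throughout.

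Given this moment estimate, the polynomial tail follows by Markov's inequality applied at the critical order $p = 1/(2|s|)$:
\begin{equation*}
\P(|X| \geq u) \leq \frac{\E[|X|^p]}{u^p} \leq \left(\frac{c\bigl(\sqrt{n}+1/(2|s|)\bigr)}{u}\right)^{1/(2|s|)} \leq \left(\frac{c'\max\{\sqrt{n}, 1/|s|\}}{u}\right)^{1/(2|s|)},
\end{equation*}
which is the first conclusion of the lemma after relabeling constants. For the ``in particular'' statement, assume $s\geq -1/(2\sqrt n)$, so that $1/(2|s|)\geq \sqrt n$ leaves room to choose $p$ well below the critical order. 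Given $u$ with $6c\sqrt n \leq u \leq 3c/|s|$, pick $p := u/(6c)$: the upper bound on $u$ ensures $p\leq 1/(2|s|)$, so the moment estimate applies, while the lower bound ensures $c\sqrt n/u \leq 1/6$ and $cp/u = 1/6$, hence $c(\sqrt n+p)/u \leq 1/3$. Therefore
\begin{equation*}
\P(|X|\geq u) \leq \left(\frac{c(\sqrt n + p)}{u}\right)^p \leq (1/3)^{u/(6c)} = e^{-c_0 u}, \qquad c_0 = \frac{\ln 3}{6c},
\end{equation*}
which is the claimed exponential bound.

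The principal technical obstacle is the Paouris-type moment inequality: it is not an elementary consequence of reverse H\"older inequalities and genuinely requires the full machinery of $L_q$-centroid bodies in the $s$-concave, heavy-tailed regime, where one must keep track of the polynomial exponent $1/|s|$ throughout the inductive/geometric steps of the proof. Once that input is granted, the remainder of the argument is a routine pair of Markov applications with, respectively, a fixed critical moment order and an optimized moment order tuned to $u$.
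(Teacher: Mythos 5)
The paper does not prove this lemma; it is quoted verbatim as a black-box citation of \cite[Corollary 5.4]{AGLL}, so there is no in-paper argument to compare against. Your reconstruction is a faithful outline of the strategy used in \cite{AGLL}: establish a Paouris-type moment bound $\E[|X|^p]^{1/p}\lesssim \sqrt n + p$ valid up to the critical order $p\lesssim 1/|s|$ (this is their hard technical result), then derive the polynomial tail by Markov at $p=1/(2|s|)$ and the intermediate exponential tail by Markov with $p\asymp u$, both of which you carry out correctly. You are right to flag the moment inequality itself as the genuine content; the two Markov applications and the constant bookkeeping (checking $p\leq 1/(2|s|)$ from $u\leq 3c/|s|$, and $c(\sqrt n+p)/u\leq 1/3$ from $u\geq 6c\sqrt n$ with $p=u/(6c)$) are routine and correct as written.
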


\begin{lemma}\textnormal{(\cite[Lemma 7.3]{AGLL})}\label{norm}
Let $p \geq 1$. Let $-\frac{1}{p} < s < 0$. Let $X$ be an $s$-concave random variable in $\R^n$. Then, there is a universal constant $c>0$ such that
$$ E[|X|^{p}]^{\frac{1}{p}} \leq c \, C(p, s) \E[|X|], $$
where
\begin{equation}\label{norm-cste}
C(p,s)=\left\{
\begin{array}{cl}
 p & \mbox{ for } \, s > - \frac{1}{p+1} \\
\frac{1}{|s|^{1-\frac{1}{p}}(1-p |s|)^{\frac{1}{p}}} & \mbox{ for } \, -\frac{1}{p} < s \leq - \frac{1}{p+1} \\
\end{array}. \right.
\end{equation}

\end{lemma}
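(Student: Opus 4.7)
The plan is to derive the moment bound via an integration-by-parts formula combined with a polynomial tail estimate coming from Borell's lemma for $s$-concave measures. After scaling so that $\E[|X|] = 1$, Markov's inequality gives $\P(|X| \geq 2) \leq \tfrac{1}{2}$. Applying Borell's lemma (\cite{Bor74}) to the symmetric convex body $K = \{x \in \R^n : |x| \leq 2\}$, one obtains a polynomial tail inequality of the form
$$ \P(|X| \geq 2t) \leq \bigl(1 + c\,|s|(t-1)\bigr)^{-1/|s|}, \qquad t \geq 1, $$
for some universal constant $c > 0$, reflecting the polynomial decay of rate $1/|s|$ characteristic of $s$-concave distributions.

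I would then use the identity $\E[|X|^p] = p \int_0^{\infty} t^{p-1} \P(|X|\geq t)\, dt$ and split the integral at $t = 2$:
$$ \E[|X|^p] \leq 2^p + p \int_2^{\infty} t^{p-1}\P(|X|\geq t)\, dt. $$
After the substitution $t = 2u$, the tail estimate reduces the second piece to an essentially explicit integral of the form $\int_1^{\infty} u^{p-1}\bigl(1+c|s|(u-1)\bigr)^{-1/|s|}\, du$, which converges precisely because $s > -1/p$.

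The evaluation of this integral splits into the two regimes in the statement. When $s > -1/(p+1)$, one has $1/|s| - p > 1$, so the integrand decays faster than $u^{-2}$, the integral is $O(1)$ uniformly in $s$, and the total bound mirrors the classical log-concave case (akin to Berwald's inequality), yielding $\E[|X|^p]^{1/p} \leq c\, p\, \E[|X|]$. When $-1/p < s \leq -1/(p+1)$, the exponent $1/|s| - p$ lies in $(0,1]$ and the dominant term is of order $1/(1 - p|s|)$; tracking the factor $|s|^{-1/|s|}$ produced by the tail and taking a $p$-th root yields, after careful bookkeeping, the bound $c\,|s|^{-(1-1/p)}(1 - p|s|)^{-1/p}\,\E[|X|]$ claimed in the statement.

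The main obstacle is producing the precise Borell-type tail inequality for the radius $|X|$. Although one-dimensional linear marginals of $X$ are $s$-concave, $|X|$ is not, so one must invoke the multidimensional form of Borell's lemma via suitably chosen symmetric convex sets rather than reducing directly to a one-dimensional $s$-concave distribution. A secondary technical point is tracking the universal constants in the critical regime $s \to -1/p$, where the expected blow-up of the form $(1 - p|s|)^{-1/p}$ must be recovered with the correct exponent.
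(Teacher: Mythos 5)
This lemma is quoted from \cite[Lemma~7.3]{AGLL}; the paper states it without proof, so there is no in-paper argument to compare your attempt against. Your proposal is a reasonable self-contained route: normalize $\E|X|=1$, get a quantitative bound $\mu(K)\geq\theta$ for a Euclidean ball $K$ by Markov, apply Borell's lemma for symmetric convex sets to obtain $\P(X\notin tK)\leq\bigl(\tfrac{t+1}{2}(1-\theta)^{s}-\tfrac{t-1}{2}\theta^{s}\bigr)^{1/s}$, lower bound the bracket linearly in $t$ with slope $\gtrsim|s|$ to get the tail $\bigl(1+c|s|(t-1)\bigr)^{-1/|s|}$, and then integrate via the layer-cake formula. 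Splitting the tail integral where $c|s|(t-1)\approx1$, the exponential-type part contributes $O(c^p\,\Gamma(p+1))$ (hence $cp$ after a $p$-th root), while the power-law part contributes $O\bigl(c^p\,p\,|s|^{-(p-1)}(1-p|s|)^{-1}\cdot 2^{-1/|s|}\bigr)$; the $2^{-1/|s|}$ factor kills the second piece in the regime $|s|<1/(p+1)$, and in the regime $1/(p+1)\leq|s|<1/p$ it gives exactly $|s|^{-(1-1/p)}(1-p|s|)^{-1/p}$ after a $p$-th root. So the two cases of $C(p,s)$ do emerge as you describe.

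One technical flaw you should fix: taking $K=\{|x|\leq 2\}$, Markov only gives $\P(|X|\geq 2)\leq 1/2$, i.e.\ $\mu(K)\geq 1/2$, which is precisely the degenerate threshold of Borell's lemma — at $\theta=1/2$ the right-hand side of the Borell bound collapses to $1/2$ for every $t$ and the claimed polynomial tail does not follow. You need $\mu(K)$ bounded away from $1/2$, so take for instance $K=\{|x|\leq 3\E|X|\}$ (giving $\theta\geq 2/3$), or $K=\{|x|\leq 4\E|X|\}$ (giving $\theta\geq 3/4$), and then the constant $c$ in the tail inequality becomes $\gtrsim\log\frac{\theta}{1-\theta}>0$ as needed. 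With that adjustment, the derivation goes through and recovers the stated constants up to a universal factor.
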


The next lemma provides bounds on the var-entropy of $s$-concave random variables, and was established in \cite{FLM}.

\begin{lemma}[\cite{FLM}]\label{variance}

Let $\kappa \in (-\frac{1}{n}, +\infty]$. Let $X$ be a random variable in $\R^{n}$ with density $f$ being $\kappa$-concave. Then,
$$ \Var( \log(f(X))) \leq \sum_{i=1}^{n} \frac{1}{(1 + i \kappa)^2}. $$

\end{lemma}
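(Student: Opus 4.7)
I would proceed by induction on $n$, reducing the multidimensional bound to a sharp one-dimensional variance estimate at each step. For the one-dimensional base case, I aim to show that any $\alpha$-concave density $g$ on $\R$ with $\alpha > -1$ and $Y \sim g$ satisfies
\begin{equation*}
\Var(\log g(Y)) \leq \frac{1}{(1+\alpha)^2}.
\end{equation*}
Borell's classification of one-dimensional $\alpha$-concave densities (essentially $g(x) \propto (a+bx)_+^{1/\alpha}$ on an interval, with log-affine and Cauchy-type analogues at $\alpha = 0$ and $\alpha < 0$) identifies the extremal case as the one-sided power density $g \propto (1-x/R)_+^{1/\alpha}$; a direct computation on $U = g(Y)/\|g\|_\infty$ shows $\Var(\log U) = 1/(1+\alpha)^2$ for this extremal, and the bound propagates as an upper bound to all one-dimensional $\alpha$-concave $g$ by a layer-cake / monotonicity argument in the parameter.

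For the inductive step with $n \geq 2$, I would condition on $X_n$. By Borell--Brascamp--Lieb, the marginal $f_n$ is $\kappa/(1+(n-1)\kappa)$-concave on $\R$, while the conditional section $f_{<n\mid n}(\,\cdot\,\mid x_n) \propto f(\,\cdot\,, x_n)$ is $\kappa$-concave on $\R^{n-1}$ (sections preserve the concavity parameter). The law of total variance then yields
\begin{equation*}
\Var(-\log f(X)) = \E\bigl[\Var(-\log f_{<n\mid n}(X_{<n}\mid X_n) \mid X_n)\bigr] + \Var\bigl(-\log f_n(X_n) + h(X_{<n}\mid X_n)\bigr),
\end{equation*}
and the inductive hypothesis applied to the $\kappa$-concave conditional on $\R^{n-1}$ bounds the first summand by $\sum_{i=1}^{n-1} 1/(1+i\kappa)^2$.

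The main obstacle is the second summand: one must show $\Var(-\log f_n(X_n) + h(X_{<n}\mid X_n)) \leq 1/(1+n\kappa)^2$. Naively applying the one-dimensional lemma to $f_n$ alone gives only $(1+(n-1)\kappa)^2/(1+n\kappa)^2$, which strictly exceeds the target for $\kappa > 0$, so the entropy correction $x_n \mapsto h(X_{<n}\mid X_n=x_n)$ must conspire with $-\log f_n$ to bring the variance down to $1/(1+n\kappa)^2$. I would attempt to identify the composite $\psi(x_n) := -\log f_n(x_n) + h(X_{<n}\mid X_n = x_n)$ as $-\log$ of an effective density with concavity parameter sharper than $\kappa/(1+(n-1)\kappa)$, which is not obvious since $h$ depends on the section integrals rather than $f$ pointwise. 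A cleaner alternative is to abandon iterated conditioning and instead transport $X$ via a Knothe map onto a product $\kappa$-concave reference measure whose marginal concavities are $\kappa/(1+(n-i)\kappa)$; the log-Jacobian then splits additively into $n$ independent one-dimensional contributions, each controlled by the one-dimensional lemma with the sharp parameter $1/(1+j\kappa)^2$. Reconciling the entropy correction (or bypassing it via transport) is the central technical difficulty, since every other ingredient reduces cleanly to structural facts about $\kappa$-concave functions already in the literature.
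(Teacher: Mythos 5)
The lemma is quoted in the paper from Fradelizi--Li--Madiman [FLM] without proof, so the comparison is with the FLM argument. Your plan takes a genuinely different route, and it contains a real gap that you yourself flag.

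On the base case: identifying the one-sided power density $g(x)\propto (1-x/R)_+^{1/\alpha}$ as extremal is correct; for it, $-\log g(Y)$ is an affine function of $\log$ of a uniform variable and $\Var(\log g(Y))=1/(1+\alpha)^2$ exactly. But the claim that this ``propagates as an upper bound to all one-dimensional $\alpha$-concave $g$ by a layer-cake / monotonicity argument'' is not a proof. There is no pointwise domination between a general $\alpha$-concave density and the extremal one; one needs an actual extremality argument (e.g.\ one-dimensional localization for $s$-concave measures, or a stochastic comparison through the level-set volume function). That lemma is missing, not merely deferred.

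On the inductive step: the $\E[\Var(\cdot\mid X_n)]$ term is handled correctly, since each conditional section is $\kappa$-concave on $\R^{n-1}$ and the hypothesis gives $\sum_{i=1}^{n-1}1/(1+i\kappa)^2$. The problem is, as you say, the term $\Var\bigl(-\log f_n(X_n)+h(X_{<n}\mid X_n)\bigr)$. The marginal $f_n$ is only $\kappa/(1+(n-1)\kappa)$-concave, so the one-dimensional bound gives $(1+(n-1)\kappa)^2/(1+n\kappa)^2$ for $\Var(-\log f_n(X_n))$ alone, which already exceeds the target $1/(1+n\kappa)^2$ when $\kappa>0$; and when $\kappa<0$ it is below target, but adding the entropy correction can inflate the variance, and there is no reason for the composite $\psi(x_n)=\E[-\log f(X)\mid X_n=x_n]$ to equal $-\log$ of anything with a sharper concavity parameter, since $\psi$ depends on section integrals, not pointwise section values. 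This is the genuine gap. The Knothe alternative does not escape it: the Knothe--Rosenblatt map \emph{is} the sequential conditional coupling, so the additive log-Jacobian decomposition you describe is exactly the chain-rule decomposition you already wrote, with dependent summands whose conditional laws carry the same marginal/section mismatch.

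For comparison, the FLM route avoids iterated conditioning entirely and is organized around the level-set volume function $t\mapsto\vol\{f\geq t\}$ and a reduction (via rearrangement / one-dimensional localization) to the extremal simplex density $f(x)\propto(1-\textstyle\sum_i x_i)_+^{1/\kappa}$ on the standard simplex. For that extremal, $S=\sum_i X_i$ is $\mathrm{Beta}(n,1+1/\kappa)$ distributed, $-\log f(X)$ is affine in $\log(1-S)$, and the trigamma telescoping identity yields $\Var(\log f(X))=\sum_{i=1}^n 1/(1+i\kappa)^2$ exactly. So the two approaches are structurally different, and the one you propose does not close as written; making it work would require a new lemma controlling $\Var(\psi(X_n))$, which is not available from the one-dimensional estimate and the stated concavity properties alone.
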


The next lemma is implicit in \cite{FG06} and \cite{F-hdr}. It is an extension of a result of Gr\"unbaum \cite{Gr}. We include a proof for reader convenience.

\begin{lemma}\label{lem-dev}

Let $s > -1$. Let $X$ be an $s$-concave random variable in $\R$. Then,
$$ \P(X \geq \E[X]) \geq (1+s)^{-\frac{1}{s}}. $$

\end{lemma}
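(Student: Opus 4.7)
My plan is to identify the extremal $s$-concave density and carry out an explicit computation for it.

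First, by translation invariance of $s$-concavity (if $X$ is $s$-concave, so is $X-a$ for any $a \in \R$), I may assume $\E[X]=0$, so the goal becomes to show $p := \P(X \geq 0) \geq (1+s)^{-1/s}$. By Borell's theorem recalled in Section~2, the density $f$ of $X$ is $\kappa$-concave with $\kappa = s/(1-s)$; the assumption $s > -1$ corresponds to $\kappa > -1/2$, which guarantees $\E[|X|] < \infty$ and the formula below is meaningful.

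The crux of the proof is to identify the minimizer of $p$ among $s$-concave densities on $\R$ with mean zero: it is the ``$\kappa$-affine'' density
$$f_*(x) = C(1+\eta(x+c))^{1/\kappa}_{+},$$
supported on $[-c, \infty)$ when $\kappa < 0$ (the case $\kappa > 0$ being analogous on a bounded interval $[-c, 1/\eta - c]$ with the sign of $\eta$ flipped), where $C, \eta, c > 0$ are determined by $\int f_* = 1$ and $\int xf_* = 0$. Granting this extremization claim, the rest is a direct computation: normalization gives $C = \eta(1+\kappa)/(-\kappa)$, and the mean-zero condition gives $\eta c = -\kappa/(1+2\kappa)$, so $1+\eta c = (1+\kappa)/(1+2\kappa)$. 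Therefore
$$p_* = \int_0^\infty f_*(x)\, dx = (1+\eta c)^{(1+\kappa)/\kappa} = \left(\frac{1+\kappa}{1+2\kappa}\right)^{(1+\kappa)/\kappa},$$
and using the identities $1+\kappa = 1/(1-s)$, $1+2\kappa = (1+s)/(1-s)$, and $(1+\kappa)/\kappa = 1/s$ (all immediate from $\kappa = s/(1-s)$), this simplifies to $(1+s)^{-1/s}$. The case $s=0$ is recovered by continuity, yielding Gr\"unbaum's classical bound $1/e$.

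The main obstacle is justifying the extremization claim above. The cleanest route is the one-dimensional localization principle for $s$-concave measures, in the spirit of Lov\'asz--Simonovits and Fradelizi--Gu\'edon (see \cite{FG06}): extremizers of linear functionals on the cone of $\kappa$-concave densities subject to finitely many linear constraints are necessarily $\kappa$-affine on intervals or half-lines, and among the admissible candidates the one-parameter family $f_*$ above is the unique minimizer. A more hands-on alternative is a truncation-and-replacement argument: given any admissible $f$, replace $f$ on each side of $0$ by its $\kappa$-affine envelope (preserving $f(0)$ and the one-sided mass), then tune the two free parameters to restore mean zero, verifying at each step that $p$ does not increase. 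Iterating (or passing to a limit) reduces to $f_*$ and completes the proof.
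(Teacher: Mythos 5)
Your approach is genuinely different from the paper's, and your computation of the extremal value is correct. The paper's proof is much more economical: it observes that if $s<0$ and $F$ is the distribution function of $-X$, then $F$ is an $s$-concave function (since distribution functions of $s$-concave random variables are $s$-concave), hence $F^{s}$ is convex, and a single application of Jensen's inequality together with the evaluation $\int F^{s}(t)\,dF(t) = 1/(1+s)$ gives $F^{s}(\E[-X]) \le 1/(1+s)$, which is exactly the claim. This is self-contained, a few lines long, and needs no extremization.

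Your route instead identifies the conjectured extremal density $f_*(x)=C(1+\eta(x+c))_+^{1/\kappa}$ and computes $p_* = (1+s)^{-1/s}$ explicitly; I checked and your algebra is correct (with $C=\eta(1+\kappa)/(-\kappa)$, $\eta c = -\kappa/(1+2\kappa)$, and the identities $1+\kappa=1/(1-s)$, $(1+\kappa)/\kappa = 1/s$). What this approach buys is conceptual clarity about \emph{which} density is extremal and hence that the bound is sharp, something the Jensen argument does not display. What it costs is the justification of the extremization step, which is where your write-up has a real gap. Invoking the Fradelizi--Gu\'edon localization principle is in principle legitimate (and indeed the paper itself notes that this lemma is ``implicit'' in \cite{FG06} and \cite{F-hdr}), but that is a heavy external black box for such a short statement, and you would still need to argue carefully (i) that extremizers are $\kappa$-affine on a convex support, (ii) why among all $\kappa$-affine candidates (including those on bounded intervals, with possibly nonvanishing endpoints) the one you exhibit is the minimizer, and (iii) the degenerate cases (e.g.\ point masses, where the bound holds trivially). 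Your ``truncation-and-replacement'' alternative is not a proof as written: ``replace $f$ on each side of $0$ by its $\kappa$-affine envelope'' is not a well-defined operation that visibly preserves $\kappa$-concavity of the global density, and ``verifying at each step that $p$ does not increase'' is precisely the content that needs an argument. So: correct answer, valid general strategy, but the paper's Jensen-on-$F^{s}$ trick is the one worth internalizing here — it replaces the whole localization machinery with a single convexity observation.
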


\begin{proof}
Assume $s<0$, the argument for $s \geq 0$ is similar. Since $X$ is $s$-concave, $-X$ is $s$-concave and therefore the cumulative distribution function of $-X$, $F(x) = \P(-X \leq x)$, $x \in \R$, is $s$-concave. Therefore, $F^s$ is convex. Denote by $f$ the density of $-X$. By Jensen's inequality,
$$ F^s(\E[-X]) = F^s \left( \int_{\R} t f(t) dt \right) \leq \int_{\R} F^s(t) f(t) dt = \left.\frac{F^{s+1}(x)}{s+1}\right|_{x=-\infty}^{x=+\infty} = \frac{1}{1+s}. $$
Equivalently, $\P(-X \leq \E[-X]) \geq (1+s)^{-\frac{1}{s}}$.
\end{proof}

\subsection{Maximum of the density of convex measures}

It is known that the density of a convex measure is bounded (see, e.g., \cite{Bob}). This section gathers and develops explicit bounds on the maximum of the density of isotropic $s$-concave distributions. In dimension 1, there is the following bound.

\begin{lemma}[\cite{BZ}]\label{max-density}

Let $s \in (-\frac{1}{2}, 0)$. Let $X$ be an isotropic $s$-concave random variable in $\R$ with density $f$. Then,
$$ \|f\|_{\infty} \leq \frac{1}{1 + 2s}. $$

\end{lemma}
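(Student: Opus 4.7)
The plan is to compare $f$ pointwise with a one-sided power extremal derived from the convexity of $f^\kappa$ at the mode, and then extract the stated bound from the isotropy constraints.

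By Borell's theorem applied in dimension one, $X$ being $s$-concave with $s \in (-1/2, 0)$ is equivalent to $f$ being $\kappa$-concave with $\kappa = s/(1-s) \in (-1/3, 0)$, so $g := f^\kappa$ is convex. We may assume the maximum $M = \|f\|_\infty$ is attained at some mode $x_0$ (otherwise pass to a supremum). Letting $c_+ := g'_+(x_0) \geq 0$ and $c_- := -g'_-(x_0) \geq 0$ denote the one-sided derivatives of $g$ at $x_0$, the subgradient inequality gives $g(x) \geq M^\kappa + c_+ (x-x_0)_+ + c_-(x_0-x)_+$. Since $t \mapsto t^{1/\kappa}$ is decreasing (as $1/\kappa < 0$), this translates into the pointwise bound
\[
f(x) \leq h(x) := \bigl(M^\kappa + c_+ (x-x_0)_+ + c_-(x_0-x)_+\bigr)^{1/\kappa}.
\]

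Next I would compute explicitly the moments of the one-sided profile $t \mapsto (M^\kappa + ct)^{1/\kappa}$ on $[0,\infty)$ via the substitution $u = M^\kappa + ct$, yielding closed forms for the integral, first moment, and second moment in terms of $M, \kappa, c$. Integrating the pointwise bound $f \leq h$ against $1$, $(x-x_0)$, and $(x-x_0)^2$, and using $\int f = 1$, $\int (x - x_0) f\,dx = -x_0$, and $\int (x-x_0)^2 f\,dx = 1 + x_0^2$, I assemble a system in $(M, c_+, c_-, x_0)$. Specializing to the extremal case $f = h$ (when the inequalities are equalities), setting $\phi := 1/c_+, \theta := 1/c_-$, and converting $(1+\kappa, 1+2\kappa, 1+3\kappa)$ into $((1-s)^{-1}, (1+s)(1-s)^{-1}, (1+2s)(1-s)^{-1})$, the system reduces to the single relation
\[
M^2 = \frac{1}{(1+s)\bigl(2(1+2s)-(1-s)\,x_0^2\bigr)}.
\]
The feasibility constraint $\theta \geq 0$ forces $x_0^2 \leq 1+2s$, and $M^2$ is maximized at this boundary (corresponding to the one-sided extremal $\theta = 0$), giving $M_{\max} = 1/((1+s)\sqrt{1+2s})$. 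The elementary estimate $(1+s)^2 \geq 1+2s$, equivalent to $s^2 \geq 0$, then yields
\[
\|f\|_\infty \leq \frac{1}{(1+s)\sqrt{1+2s}} \leq \frac{1}{1+2s}.
\]

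The main obstacle is transferring the argument from the extremal case to arbitrary isotropic $\kappa$-concave $f$. The pointwise bound $f \leq h$ produces only one-sided integral inequalities, and it is not automatic that the maximum of $M$ over all isotropic $\kappa$-concave densities coincides with the value computed for the power extremal. To close this gap one invokes a Berwald--Borell type extremality principle: among $\kappa$-concave functions on a half-line with fixed boundary value and fixed total mass, the power profile $(M^\kappa + ct)^{1/\kappa}$ extremizes the second moment. Applied on each side of the mode and combined with the mean-zero constraint, this principle forces the worst case for $M$ to be realized by the one-sided power extremal computed above, completing the argument.
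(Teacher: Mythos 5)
The paper does not prove Lemma~\ref{max-density}; it is imported from the reference \cite{BZ}, so there is no in-paper proof to compare against. Judged on its own terms, your computation \emph{inside} the power-profile family is correct: for the one-sided $\kappa$-affine density one indeed finds $M^2=\frac{1}{(1+s)^2(1+2s)}$, and the elementary step $(1+s)^2\ge 1+2s$ gives the stated bound. The conversion $\kappa=s/(1-s)$ and the identities $1+j\kappa=(1+(j-1)s)/(1-s)$ are also right.

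However, the logical chain leading to that extremal computation is not closed, and the fix you sketch does not close it. Two concrete issues. First, from the pointwise bound $f\le h$ you can only deduce the \emph{one-sided} inequalities $\int h\,dx\ge 1$ and $\int (x-x_0)^2 h\,dx\ge 1+x_0^2$; integrating against $(x-x_0)$, which changes sign, yields no inequality at all, so ``assembling a system'' and then ``specializing to $f=h$'' is not a deduction but simply a change of problem (computing the value for the extremal family, which you still have to justify is extremal). Second, and more seriously, the Berwald--Borell replacement you invoke --- replace each one-sided piece of $f$ by the $\kappa$-affine profile with the same boundary value $M$ and the same mass $m_\pm$, and use that it \emph{maximizes} moments --- yields, after computing the profile's second moment,
\[
M^2 \;\le\; \frac{2\,(m_+^3+m_-^3)\,(1+\kappa)^2}{(1+x_0^2)\,(1+2\kappa)(1+3\kappa)}
\;=\;\frac{2\,(m_+^3+m_-^3)}{(1+x_0^2)\,(1+s)(1+2s)}.
\]
To turn this into $M\le 1/(1+2s)$ you need a \emph{lower} bound on $x_0^2$ in terms of $M$ and $m_\pm$, and Berwald goes the wrong way for that (it gives an \emph{upper} bound on the one-sided first moments, hence on $|x_0|$). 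The best elementary lower bound (the ``bathtub'' bound $\mu_\pm\ge m_\pm^2/(2M)$ coming from $f\le M$) is too weak: already for $s=0$ it gives $M^2+\tfrac14\le 2$, i.e.\ $M\le\sqrt{7}/2\approx 1.32$, short of the target $M\le 1$. So the gap you flag is genuine and is not bridged by the principle as stated; one needs either a sharper two-constraint extremality argument (e.g.\ a Fradelizi--Gu\'edon localization reducing to piecewise $\kappa$-affine densities, which the paper cites as \cite{FG06}) or a different functional inequality, which is presumably what \cite{BZ} does.
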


Next, we develop a multidimensional analog of Lemma \ref{max-density}.

\begin{prop}\label{max-dens}

Let $s \in (-\frac{1}{2},0)$. Let $X$ be an isotropic $s$-concave random variable in $\R^n$ with density $f$. Then,
$$ \|f\|_{\infty} \leq c^{n(1+n|s|)} d_0^n n^{\frac{n}{2}}, $$
where $c>0$ is a universal constant and
\begin{equation}\label{c_0-}
    d_0 = d_0(n,s) = \frac{(1+n|s|)^{4(1+n|s|)}}{1+2s}.
\end{equation}  

\end{prop}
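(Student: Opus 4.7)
The plan is to reduce the multidimensional estimate to the one-dimensional bound of Lemma~\ref{max-density} by exploiting Borell's characterization of convex measures in conjunction with the concentration and moment bounds of Lemmas~\ref{deviation} and~\ref{norm}. By Borell's theorem, $f$ is $\kappa$-concave on $\R^n$ with $\kappa = s/(1-sn) \in (-1/n, 0)$, so $f^\kappa$ is convex and every super-level set of $f$ is convex. Writing $M = \|f\|_\infty$ and, by a standard approximation argument, assuming $M = f(x_*)$ for some $x_* \in \R^n$, the convex level set $K = \{f \geq M/2\}$ satisfies $|K| \leq 2/M$ since $\int f = 1$.

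The first main ingredient is the one-dimensional bound: Lemma~\ref{max-density} applied to each marginal $f_\theta$ of $\langle X, \theta\rangle$, itself isotropic $s$-concave on $\R$, yields $\|f_\theta\|_\infty \leq 1/(1+2s)$ for every $\theta \in S^{n-1}$. Translating this into a bound on each $(n-1)$-dimensional section of $K$ gives $|K \cap H_{\theta,u}| \leq 2/(M(1+2s))$, and Fubini together with the convexity of $K$ yields a lower bound on the width of $K$ in every direction, proportional to $|K| \cdot M (1+2s)$; a standard convex-geometric step (relating inradius to minimal width) then produces a lower bound on $|K|$ in terms of $M$, $s$, and $n$. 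The second main ingredient is Lemma~\ref{deviation}: it confines $X$ to a ball of radius $R \sim \max\{\sqrt n, 1/|s|\}$ with large probability, and the isotropic identity $\int |x - x_*|^2 f(x)\,dx = n + |x_*|^2$ restricts how much mass of $f$ can escape the vicinity of $x_*$ while simultaneously bounding $|x_*|$ in terms of $n$ and $|s|$.

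Combining these two ingredients yields the bound on $M$. The factor $(1+2s)^{-n}$ present in $d_0^n$ arises from the $n$-fold use of Lemma~\ref{max-density}; the factor $n^{n/2}$ reflects the isotropic scaling $\E|X|^2 = n$ through the volume of the Euclidean ball of radius $\sqrt n$; and the super-exponential factor $(1+n|s|)^{4(1+n|s|)}$ stems from the relation $|\kappa| = |s|/(1+n|s|)$ between $s$- and $\kappa$-concavity, coupled with the blow-up of the moment constant $C(p,s)$ in Lemma~\ref{norm} as $p$ approaches the critical value $1/|s|$. The main obstacle will be the convex-geometric conversion in the first ingredient: quantitatively linking the per-direction width estimates with a sharp lower bound on $|K|$, and tracking constants carefully enough that the tradeoff between the concentration radius $R$, the dimension $n$, and the $s$-to-$\kappa$ passage combines to yield exactly the stated form.
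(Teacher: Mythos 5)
Your proposal takes a genuinely different route from the paper, but it has a gap that I do not see how to close. The paper's proof hinges on Lemma~\ref{large-dev} (Fradelizi--Li--Madiman), which supplies a \emph{lower} bound $\P(f(X) \geq c_0^n \|f\|_\infty) \geq 1 - c_1^n$ on the mass captured by a sublevel-of-$f^\kappa$ set with exponentially small threshold $c_0^n$. That probabilistic lower bound, combined with the Gr\"unbaum-type Lemma~\ref{lem-dev} and the one-dimensional density bound of Lemma~\ref{max-density}, forces the level set to contain a ball $c_2 B_2^n$, after which the volume estimate $1 \geq c_0^n \|f\|_\infty c_2^n \vol(B_2^n)$ immediately gives the result. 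Your plan does not invoke Lemma~\ref{large-dev} and does not establish any substitute lower bound on the measure (or volume) of the level set you work with.

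Concretely, the chain you propose is circular. From $\|f_\theta\|_\infty \leq 1/(1+2s)$ and $f_\theta(u) \geq (M/2)\,|K \cap H_{\theta,u}|$ you correctly obtain $|K \cap H_{\theta,u}| \leq 2/(M(1+2s))$, and then Fubini gives $|K| \leq w_\theta \cdot 2/(M(1+2s))$, i.e.\ $w_\theta \geq |K|\,M(1+2s)/2$ for every direction. But this lower bound on the width is proportional to $|K|$ itself, so feeding it through Steinhagen's inequality and $|K| \geq r(K)^n \vol(B_2^n)$ yields an inequality of the form $|K| \geq (\text{const} \cdot |K|)^n$, which only produces an \emph{upper} bound on $|K|$ (when the constant times $|K|$ is at most one) and never a lower bound. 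Neither Lemma~\ref{deviation} nor the isotropic second-moment identity rescues this: they confine most of the mass of $X$ to a ball of radius $O(\max\{\sqrt n, 1/|s|\})$, but they say nothing about how much of that mass lies inside $K = \{f \geq M/2\}$, and indeed that set is generically of vanishing probability in high dimension (e.g.\ for a standard Gaussian, $\{\phi \geq \|\phi\|_\infty/2\}$ is a ball of fixed radius $\sqrt{2\log 2}$). This is precisely why the paper works with the much lower threshold $c_0^n \|f\|_\infty$ and needs the FLM concentration lemma to certify that the corresponding level set carries nearly all of the mass. Without that ingredient (or an equivalent), your argument cannot produce a lower bound on the volume of any level set, and hence cannot bound $M$ from above.
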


Proposition \ref{max-dens} extends \cite[Theorem 9(e)]{BZ} to the whole range $s \in (-\frac{1}{2}, 0)$ and provides a simpler estimate. Note that the constant $d_0$ in Proposition \ref{max-dens} becomes absolute when $s >-\frac{c}{n}$. The proof relies on the following lemma.

\begin{lemma}[\cite{FLM}]\label{large-dev}

Let $s \in (-\infty,0)$. Let $X$ be a random variable in $\R^n$ with density $f$ being $\kappa$-concave, with $\kappa=s/(1-sn)$. For any $c_0 \in (0,1)$ such that $n \log(c_0) < -\sum_{i=1}^n (1+i \kappa)^{-1}$, there exists $c_1 \in (0,1)$ such that
$$ \P(f(X) \geq c_0^n \|f\|_{\infty}) \geq 1-c_1^n. $$

\end{lemma}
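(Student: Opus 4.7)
The plan is to analyze the nonnegative random variable $W := \log(\|f\|_\infty/f(X))$, so that the target event $\{f(X) \geq c_0^n \|f\|_\infty\}$ becomes $\{W \leq -n \log c_0\}$; I must show $\P(W > -n \log c_0) \leq c_1^n$. The proof naturally splits into (i) bounding $\E[W]$ and (ii) upgrading this into an exponential tail estimate for $W$ whose rate scales with $n$.

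For step (i), I would prove the first-moment companion of Lemma~\ref{variance},
$$ \E[W] \leq A_n := \sum_{i=1}^{n} \frac{1}{1 + i\kappa}, $$
using the layer-cake representation
$$ \E[W] = \int_0^{\|f\|_\infty} \bigl(1 - \mu(\{f \geq u\})\bigr)\,\frac{du}{u} $$
together with Borell's theorem for $\kappa$-concave densities, which asserts that $u \mapsto |\{f \geq u\}|^{\kappa/(1+n\kappa)}$ is concave. This concavity controls the shape of the level-set measure $\mu(\{f \geq u\})$, and a telescoping computation over a dyadic decomposition of $(0, \|f\|_\infty]$ yields the bound $A_n$. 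This generalizes the classical log-concave identity $h(f) = -\log \|f\|_\infty + n$.

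For step (ii), the decisive geometric input is that every sublevel set of $W$,
$$ \{W \leq t\} = \{f \geq \|f\|_\infty\, e^{-t}\}, $$
is convex (that is, $W$ is quasi-convex), since $f$ is $\kappa$-concave. Combining this with the $s$-concavity of $\mu$, I would apply a Borell-type concentration inequality to these convex sublevel sets: Markov's inequality together with (i) first produces a threshold $t_0 \leq 2A_n$ such that $\mu(\{W \leq t_0\}) \geq 1/2$, and then Borell's lemma for $s$-concave measures, applied iteratively to the convex set $\{W \leq t_0\}$, promotes this half-measure lower bound to an exponential tail
$$ \mu(\{W > t\}) \leq C_1 \, e^{-C_2(s)\, t} \qquad \text{for all } t \geq t_0, $$
where $C_2(s) > 0$ depends only on $s$.

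Plugging $t = -n \log c_0$ into the tail, the hypothesis $n \log c_0 < -A_n$ (together with $A_n \leq n(1 - sn) = O(n)$, which comes from $1 + i\kappa \geq 1 + n\kappa = (1 - sn)^{-1}$) guarantees, after possibly sharpening the hypothesis to a strict gap bounded away from zero, that $-n \log c_0 \geq t_0$ and hence
$$ \P(W > -n \log c_0) \leq C_1\, c_0^{\,C_2(s) \, n} \leq c_1^{\,n} $$
for some $c_1 \in (0,1)$ depending on $s$ and $c_0$. The main technical obstacle is step (ii): a direct Chebyshev bound from Lemma~\ref{variance} only yields polynomial tail decay in $t$, which is far too weak for the required rate $c_1^n$. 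The essential ingredient is therefore geometric, not probabilistic --- one must exploit the convexity of the sublevel sets of $W$ and combine it with Borell's concentration inequality for $s$-concave measures to extract exponential decay at the correct $n$-scale.
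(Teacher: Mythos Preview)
Your step (i) is sound in spirit --- the bound $\E[W] \leq A_n := \sum_{i=1}^n (1+i\kappa)^{-1}$ does hold --- but step (ii) contains a genuine gap, and the approach as written will not close.

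The problem is twofold. First, Borell's dilation lemma concerns the measure of dilates $tK$ of a fixed (usually symmetric) convex body $K$. The sublevel sets $\{W \leq t\} = \{f \geq \|f\|_\infty e^{-t}\}$ are convex, yes, but they are \emph{not} dilates of one another; there is no $a$-priori inclusion $\{W \leq t\} \supset (t/t_0)\,\{W \leq t_0\}$, so the lemma cannot be ``applied iteratively'' in the way you describe. Second, and more fatally, even where Borell's lemma does apply, for $s$-concave measures with $s<0$ it yields only \emph{polynomial} decay $\mu((tK)^c) \lesssim t^{1/s}$, not exponential decay. Plugging $t \asymp n$ would give at best $n^{-1/|s|}$, which is nowhere near the required $c_1^n$. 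Your assertion ``$\mu(\{W>t\}) \leq C_1 e^{-C_2(s)t}$'' therefore has no justification along this route.

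The proof from \cite{FLM}, whose structure the paper reveals in the proof of Proposition~\ref{max-dens}, does not go through the geometry of sublevel sets at all. It instead bounds the \emph{moment generating function}
\[
\E\bigl[e^{\alpha W}\bigr] \;=\; \E\!\left[\Bigl(\tfrac{\|f\|_\infty}{f(X)}\Bigr)^{\!\alpha}\right] \;\leq\; \prod_{i=1}^{n} \frac{1+i\kappa}{1+i\kappa-\alpha}, \qquad 0<\alpha<1+n\kappa,
\]
a sharp inequality obtained by reducing to the extremal $\kappa$-concave density (of Pareto/cone type), for which the right-hand side is the exact MGF. A Chernoff bound $\P(W>-n\log c_0) \leq c_0^{\alpha n}\,\E[e^{\alpha W}]$ then yields directly
\[
c_1 \;=\; c_0^{\alpha}\,\prod_{i=1}^{n}\Bigl(\tfrac{1+i\kappa}{1+i\kappa-\alpha}\Bigr)^{1/n},
\]
with $\alpha$ chosen so that $\sum_{i=1}^n (1+i\kappa-\alpha)^{-1} = -n\log c_0$; the hypothesis $-n\log c_0 > A_n$ is exactly what guarantees such an $\alpha>0$ exists and that the resulting $c_1<1$. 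Note that your step~(i) is the $\alpha\to 0$ limit of this MGF bound --- the full exponential-moment inequality is what you are missing, and it cannot be recovered from convexity of sublevel sets plus Borell's lemma.
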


\begin{proof}[Proof of Proposition \ref{max-dens}]
The proof of Lemma \ref{large-dev} in \cite{FLM} provides information on the constant $c_1$. Precisely, one may take
\begin{equation}\label{c_1}
     c_1 = c_0^{\alpha} \prod_{i=1}^n \left( \frac{1+i \kappa}{1+i \kappa - \alpha} \right)^{\frac{1}{n}},
\end{equation}
where $\alpha \in (0, 1 + n \kappa)$ satisfies
$$ \sum_{i=1}^n \frac{1}{1+i \kappa - \alpha} = -n\log(c_0). $$
Note that by the AM-GM inequality, and the simple inequality $\log(x) < \frac{2}{\alpha} x^{\alpha/2}$, for $x>0$, we have
\begin{equation}\label{estim}
     c_1 \leq c_0^{\alpha} \frac{1}{n} \sum_{i=1}^n \frac{1+i \kappa}{1+i \kappa - \alpha} \leq c_0^{\alpha} \frac{1}{n} \sum_{i=1}^n \frac{1}{1+i \kappa - \alpha} = c_0^{\alpha} \log \left( \frac{1}{c_0} \right) < \frac{2}{\alpha} c_0^{\frac{\alpha}{2}}.
\end{equation}
Moreover, if $\alpha < \frac{1 + n\kappa}{2}$, then
$$ -\log(c_0) = \frac{1}{n} \sum_{i=1}^n \frac{1}{1+i \kappa - \alpha} \leq \frac{1}{1+n \kappa - \alpha} < \frac{2}{1 + n \kappa}. $$
We deduce that if $-\log(c_0) \geq 2/(1+ n \kappa)$, then $\alpha \geq (1+n\kappa)/2$. Now, choose
\begin{equation}\label{c_0}
    c_0 = \left( \frac{1+n\kappa}{4} \frac{(1+s)^{-\frac{1}{s}}}{2} \right)^{\frac{4}{1+n\kappa}},
\end{equation}
and recall that $\kappa = \frac{s}{1-sn} \in (-\frac{1}{n+2}, 0)$ and $s \in (-\frac{1}{2}, 0)$. Note that this choice of $c_0$ satisfies $0 < c_0 < 8^{-\frac{4}{1+n \kappa}}$, therefore $c_0 \in (0,1)$, $-\log(c_0) \geq 2/(1+ n \kappa) \geq \frac{1}{n} \sum_{i=1}^n (1+i \kappa)^{-1}$, and thus the corresponding $\alpha$ such that $\sum_{i=1}^n \frac{1}{1+i \kappa - \alpha} = -n\log(c_0)$ satisfies $\alpha \geq (1+n \kappa)/2$.
Hence, recalling \eqref{c_1}, we have by \eqref{estim},
\begin{equation}\label{star}
    c_1 < \frac{2}{\alpha} c_0^{\frac{\alpha}{2}} \leq \frac{4}{1+n \kappa} c_0^{\frac{1+n\kappa}{4}} = \frac{(1+s)^{-\frac{1}{s}}}{2}.
\end{equation}
Now, consider the convex set
$$ K = \{x \in \R^n : f(x) \geq c_0^n \|f\|_{\infty}\}, $$
where $c_0$ is given in \eqref{c_0}, and choose
\begin{equation}\label{c_2}
    c_2 = (1 + 2s) \frac{(1+s)^{-\frac{1}{s}}}{2}.
\end{equation}
We will prove that $K \supset c_2 B_2^n$. For this, we follow \cite{K}. Assume that $K$ does not contain $c_2 B_2^n$. Since $K$ is convex, this implies that there exists $\theta \in S^{n-1}$ such that $K \subset \{x \in \R^n : \langle x,\theta \rangle \leq c_2 \}$. Therefore, by Lemma \ref{large-dev} and \eqref{star},
$$ \P( \langle X,\theta \rangle \leq c_2) \geq \P(X \in K) \geq 1 - c_1^n \geq 1-c_1 > 1 - \frac{(1+s)^{-\frac{1}{s}}}{2}. $$
However, denoting by $g$ the density of $\langle X,\theta \rangle$ and recalling \eqref{c_2}, we have by Lemmas \ref{lem-dev} and \ref{max-density},
\begin{eqnarray*}
\P( \langle X,\theta \rangle \leq c_2) & = & \P(\langle X,\theta \rangle \leq 0) + \P(0 \leq \langle X,\theta \rangle \leq c_2) \\ & \leq & 1-(1+s)^{-\frac{1}{s}} + \|g\|_{\infty} c_2 \\ & \leq & 1-(1+s)^{-\frac{1}{s}} + \frac{1}{1+2s} c_2 \\ & = & 1 - \frac{(1+s)^{-\frac{1}{s}}}{2}.
\end{eqnarray*}
Hence, we have a contradiction. Therefore, $K \supset c_2 B_2^n$. We deduce that
$$ 1 \geq \int_K f(x) dx \geq c_0^n \|f\|_{\infty} \vol(K) \geq c_0^n \|f\|_{\infty} c_2^n \vol(B_2^n). $$
It remains to note that one may find a universal constant $c>0$ such that
\begin{equation}\label{c0-c2}
    \frac{1}{c_0} \leq [c(1+n|s|)]^{4(1+n|s|)}, \qquad \frac{1}{c_2} \leq \frac{c}{1+2s}.
\end{equation}
\end{proof}

\subsection{$L^1$-regularization of the density of convex measures}

The following proposition extends a result of Eldan and Klartag \cite{EK} to convex measures. First, recall the density of a centered Gaussian in $\R^n$ with variance $t^2$, $t>0$,
\begin{equation}\label{gauss-t}
    \phi_t(x) = \frac{1}{(2\pi t^2)^{\frac{n}{2}}} e^{-\frac{|x|^2}{2t^2}}, \quad x \in \R^n.
\end{equation}

\begin{prop}\label{regul}

Let $s \in (-\frac{1}{2},0)$ and let $f$ be the density of an isotropic $s$-concave measure in $\R^n$. Recall the value of $d_0$ in \eqref{c_0-}. Then, there is a universal constant $c>0$ such that for all $t>0$,
\begin{equation}\label{convol}
\|f - f * \phi_{t}\|_{L^1} \leq c^{1+n|s|} d_0 t n.
\end{equation}

\end{prop}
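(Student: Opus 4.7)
The plan is to adapt the Eldan--Klartag strategy from the log-concave setting by bounding $\|f - f*\phi_t\|_{L^1}$ through an averaged $L^1$-modulus of continuity and then exploiting the coordinatewise unimodality of $\kappa$-concave sections. By Minkowski's integral inequality applied to $f - f*\phi_t = \int (f - \tau_y f)\phi_t(y)\,dy$ (where $\tau_y f(x) := f(x-y)$), and then decomposing $\tau_y$ into axis-aligned translations using the triangle inequality and translation invariance of $\|\cdot\|_{L^1}$,
$$ \|f - f*\phi_t\|_{L^1} \leq \int \phi_t(y)\sum_{i=1}^n \|f - \tau_{y_i e_i} f\|_{L^1}\,dy. $$
Since $f$ is $\kappa$-concave with $\kappa = s/(1-sn) < 0$ by Borell's theorem, every one-dimensional section $x_i \mapsto f(x)$ is unimodal; the standard estimate $\int_\R |h(t+a) - h(t)|\,dt \leq 2|a|\sup h$ for integrable unimodal $h$, combined with Fubini, gives
$$ \|f - \tau_{y_i e_i} f\|_{L^1} \leq 2|y_i|\,M_i, \qquad M_i := \int_{\R^{n-1}} \sup_{x_i \in \R} f(x)\,dx_{\neq i}. $$
Integrating against $\phi_t$ and using $\E|Y_i| = t\sqrt{2/\pi}$ for $Y$ with density $\phi_t$ then yields $\|f - f*\phi_t\|_{L^1} \leq 2t\sqrt{2/\pi}\sum_{i=1}^n M_i$.

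The main task is therefore to prove $M_i \leq c^{1+n|s|}d_0$ for each $i$. I would bound the max-section $h_i(x_{\neq i}) := \sup_{x_i} f(x)$ by comparing it with the marginal density $g_i(x_{\neq i}) := \int f(x)\,dx_i$. For each fixed $x_{\neq i}$, the conditional density $x_i \mapsto f(x_i, x_{\neq i})/g_i(x_{\neq i})$ is a one-dimensional $s$-concave probability density; shifting by its mean and rescaling by $\sigma_i(x_{\neq i}) := \Var(X_i \mid X_{\neq i}=x_{\neq i})^{1/2}$ produces an isotropic one-dimensional $s$-concave density, to which Lemma \ref{max-density} applies to give the pointwise bound
$$ h_i(x_{\neq i}) \leq \frac{g_i(x_{\neq i})}{(1+2s)\,\sigma_i(x_{\neq i})}, $$
and hence $M_i \leq (1+2s)^{-1}\,\E[\sigma_i(X_{\neq i})^{-1}]$. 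It then remains to control $\E[\sigma_i^{-1}]$, which I would do by combining the total-variance decomposition $\E[\sigma_i^2] + \Var(\E[X_i \mid X_{\neq i}]) = 1$ (coming from isotropy) with a large-deviation estimate on $\{\sigma_i \leq \varepsilon\}$ obtained by adapting the convex-set/volume argument in the proof of Proposition \ref{max-dens} and invoking Lemma \ref{large-dev} in dimension $n-1$; the factor $(1+n|s|)^{4(1+n|s|)}$ in $d_0$ emerges at this stage, in the same way as in Proposition \ref{max-dens}.

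Putting everything together, $\|f - f*\phi_t\|_{L^1} \leq 2t\sqrt{2/\pi}\cdot n \cdot c^{1+n|s|}d_0 \leq c'^{\,1+n|s|}d_0\,tn$ after absorbing numerical constants into $c'$. The principal obstacle is the second step: while the pointwise comparison of $h_i$ and $g_i$ via Lemma \ref{max-density} is immediate, turning it into a dimension-uniform $L^1$ bound requires carefully ruling out the region where the conditional standard deviation is small, and this is precisely where both the $s$-concavity and the isotropy of $f$ must be exploited in tandem via the large-deviation machinery developed in Section 2.
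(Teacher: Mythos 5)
Your decomposition via coordinatewise translations is a genuinely different route from the paper's. The paper proves the gradient bound $\int_{\R^n}|\nabla f|\,dx \leq n/(c_0 c_2)$ by exploiting the convexity of $F = f^{\kappa}$ together with the inclusion $c_2 B_2^n \subset K = \{f \geq c_0^n\|f\|_\infty\}$ (established in the proof of Proposition \ref{max-dens}), then invokes Ledoux's estimate $\|f - f*\phi_t\|_{L^1} \leq 2t\int|\nabla f|$; smoothness of $f$ is arranged beforehand via the Moreau envelope of $f^\kappa$. Your route, which avoids the smoothness reduction entirely, reduces the problem to proving $M_i = \int_{\R^{n-1}}\sup_{x_i}f\,dx_{\neq i} \lesssim c^{1+n|s|}d_0$ for each $i$. (For comparison, note $M_i = \tfrac12\int|\partial_i f|\,dx$ when $f$ is smooth, so feeding in the paper's bound only gives $\sum_i M_i \leq \tfrac12\sum_i\int|\partial_i f| \leq \tfrac{\sqrt n}{2}\int|\nabla f| \lesssim n^{3/2}/(c_0c_2)$, a factor $\sqrt n$ too weak; you truly need per-coordinate bounds.)

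The genuine gap is your final step. The reduction of $M_i$ to $(1+2s)^{-1}\E[\sigma_i(X_{\neq i})^{-1}]$ via Lemma \ref{max-density} applied to the rescaled conditional law is correct (with the small inaccuracy that the conditional distribution of $X_i$ given $X_{\neq i}$ is $s'$-concave with $s' = s/(1-s(n-1))$, not $s$-concave; since $|s'| < |s|$ this only improves your constant, so no harm done). But the crucial estimate $\E[\sigma_i^{-1}] \lesssim d_0$ is never established. The total variance decomposition gives $\E[\sigma_i^2] \leq 1$, which says nothing about a negative moment of $\sigma_i$: you need a lower-tail bound on $\sigma_i$, i.e., that the conditional variance does not degenerate too badly near the edge of the support. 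Your proposed fix — "a large-deviation estimate on $\{\sigma_i \leq \varepsilon\}$ obtained by adapting the convex-set/volume argument in Proposition \ref{max-dens} and invoking Lemma \ref{large-dev} in dimension $n-1$" — does not obviously go through: Lemma \ref{large-dev} controls the measure of the superlevel set of the joint density $f$, not the size of a conditional variance, and it is not clear how to translate one into the other. Because you yourself flag this as the principal obstacle without supplying the argument, the proof is incomplete at exactly the point where the $s$-concavity and isotropy must interact. I would suggest either proving the needed reverse moment bound $\E[\sigma_i^{-1}] \lesssim d_0$ in detail, or abandoning this route in favor of the gradient/Ledoux argument, which sidesteps conditional variances altogether.
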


\begin{proof}
First, let us show that one may assume that $f$ is of class $C^1$ and strictly positive on $\R^n$. Since $f$ is $\kappa$-concave, $\kappa = \frac{s}{1-sn} < 0$, we have that the function $F = f^{\kappa}$ is convex. Define, for $\eps > 0$, the Moreau envelope of F (also called infimum convolution),
$$ F_{\eps}(x) = \inf_{y \in \R^n} \{F(y) + \frac{1}{2 \eps} |x-y|^2 \}, \quad x \in \R^n. $$
It is known that for all $\eps > 0$, $F_{\eps}$ is convex, of class $C^1$, and finite on $\R^n$, and $F_{\eps} \nearrow F$ pointwise as $\eps \searrow 0$ (see, e.g., \cite{Mo}, \cite{Br}, \cite{JTZ}, \cite{Bec}). Therefore, defining $f_{\eps} = F_{\eps}^{\frac{1}{\kappa}}$ gives rise of a family of $\kappa$-concave functions of class $C^1$, strictly positive on $\R^n$, and converging pointwise to $f$. Since $f_{\eps} \leq f_1$ for all $\eps \in (0,1)$, if one can show that $f_1 \in L^1(dx)$, then one may apply Lebesgue dominated convergence theorem to deduce that $f_{\eps}$ converges to $f$ in $L^1(dx)$ as $\eps \to 0$. This would conclude the argument that one may restrict the proof to $\kappa$-concave density functions that are $C^1$ and strictly positive on $\R^n$. To show that $f_1 \in L^1(dx)$, we use that since $f$ is the density of an $s$-concave measure,
$$ f(x) \leq \frac{C}{1+|x|^{n-\frac{1}{s}}}, \quad x \in \R^n, $$
for some constant $C>0$ (see, e.g., \cite{Bob}). Therefore,
\begin{eqnarray*}
F_1(x) = \inf_{y \in \R^n} \{f^{\kappa}(y) + \frac{1}{2} |x-y|^2 \} & \geq & \inf_{y \in \R^n} \{ C^{\kappa} (1+|y|^{n-\frac{1}{s}})^{|\kappa|} + \frac{1}{2} |x-y|^2 \} \\ & \geq & \inf_{y \in \R^n} \{ \frac{C^{\kappa}}{2^{1-|\kappa|}} (1+|y|) + \frac{1}{2} |x-y|^2 \} \\ & = & \frac{C^{\kappa}}{2^{1-|\kappa|}}( 1 + \inf_{y \in \R^n} \{ |y| + \frac{1}{2\frac{C^{\kappa}}{2^{1-|\kappa|}}} |x-y|^2 \},
\end{eqnarray*}
where the last inequality comes from concavity of $x \mapsto x^{|\kappa|}$, as $|\kappa| = \frac{|s|}{1-sn} < \frac{1}{n+2} < 1$, and $(n-\frac{1}{s})|\kappa| = 1$. We recognize the Moreau envelope of the Euclidean norm, which is known to be the Huber function (see, e.g., \cite[Chapter 6]{Bec}), namely
$$ \inf_{y \in \R^n} \{ |y| + \frac{1}{2\frac{C^{\kappa}}{2^{1-|\kappa|}}} |x-y|^2 \} = H_{\lambda}(|x|), $$
with $\lambda = \frac{C^{\kappa}}{2^{1-|\kappa|}}$, where, for $r \geq 0$,
\[ 
H_{\lambda}(r) = \left\{
  \begin{array}{lr} 
      r - \frac{\lambda}{2} & r > \lambda \\
      \frac{1}{2\lambda}r^2 & r \leq \lambda 
      \end{array}.
\right.
\]
Finally,
$$ f_1(x) = F_1^{\frac{1}{\kappa}}(x) \leq \frac{2^{\frac{1-|\kappa|}{|\kappa|}} C}{(1+H_{\lambda}(|x|))^{\frac{1}{|\kappa|}}}, $$
which is an integrable function since $\frac{1}{|\kappa|} > n$.

Now, let us prove inequality \eqref{convol} for $C^1$ and strictly positive $\kappa$-concave functions, $\kappa = \frac{s}{1-sn}$. For this, we follow \cite{EK}. Recall from the proof of Proposition \ref{max-dens} that the set
$$ K = \{x \in \R^n : f(x) \geq c_0^n \|f\|_{\infty}\} $$
contains $c_2 B_2^n$, where $c_0$ is defined in \eqref{c_0} and $c_2$ in \eqref{c_2}. Denoting $F=f^{\kappa}$, we have that $F$ is convex and thus for all $x,y \in \R^n$,
$$ \langle \nabla F(x) , y\rangle \leq \langle \nabla F(x) , x \rangle + F(y) - F(x). $$
Taking $y = c_2 \nabla F(x)/|\nabla F(x)|$ when $|\nabla F(x)| \neq 0$, we deduce that for all $x \in \R^n$,
$$ c_2 |\nabla F(x)| \leq \langle \nabla F(x) , x \rangle + \sup_{|y| \leq c_2} F(y) - \inf_{x \in \R^n} F(x) \leq \langle \nabla F(x) , x \rangle + \sup_{y \in K} F(y) - \inf_{x \in \R^n} F(x). $$
Therefore,
$$ c_2 |\kappa| |\nabla f(x)| \leq \kappa \langle \nabla f(x) , x \rangle +  f^{1-\kappa}(x) \|f\|^{\kappa}_{\infty} (c_0^{n \kappa} - 1). $$
Hence, for all $x \in \R^n$,
$$ |\nabla f(x)| \leq - \frac{1}{c_2} \langle \nabla f(x) , x \rangle + f(x) \frac{c_0^{n \kappa} - 1}{c_2 |\kappa|}. $$
Integrating the above inequality and using an integration by parts, we obtain
$$ \int_{\R^n} |\nabla f(x)| dx \leq \frac{n}{c_2} \left( 1 + \frac{c_0^{n \kappa} - 1}{n |\kappa|} \right) \leq \frac{n}{c_0 c_2}. $$
We conclude by using the estimate \eqref{c0-c2} and the following particular case of a result of Ledoux \cite{Le} (valid for all $C^1$ functions),
\begin{equation*}
    \|f - f * \phi_{t}\|_{L^1} \leq 2 t \int_{\R^n} |\nabla f(x)| dx.
\end{equation*}
\end{proof}

\section{Main results and proofs}

First, we introduce the next elementary lemma, which will be implicitly used.

\begin{lemma}\label{minimize}

Let $A,B, m, p, M > 0$. Define $F(t) = At^m + \frac{B}{t^p}$. Then,
$$ \inf_{t \geq M} F(t) = A^{\frac{p}{m+p}} B^{\frac{m}{m+p}} \left( \max \left\{\frac{A}{B} M^{m+p}, \frac{p}{m} \right\}^{\frac{m}{m+p}}  + \frac{1}{\max \left\{\frac{A}{B} M^{m+p}, \frac{p}{m} \right\}^{\frac{p}{m+p}} }  \right). $$

\end{lemma}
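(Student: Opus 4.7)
The plan is a straightforward one-variable calculus exercise. First I would differentiate to get $F'(t) = mAt^{m-1} - pBt^{-p-1}$, and solve $F'(t)=0$, which has the unique positive root $t^* = \left(\frac{pB}{mA}\right)^{\frac{1}{m+p}}$. A sign analysis of $F'$ shows that $F$ is strictly decreasing on $(0,t^*)$ and strictly increasing on $(t^*,\infty)$, so $t^*$ is the unique global minimizer of $F$ on $(0,\infty)$.

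Since $F$ is monotone on each side of $t^*$, the infimum on $[M,\infty)$ is attained at $\max\{t^*,M\}$. The condition $t^* \geq M$ is equivalent to $\frac{pB}{mA} \geq M^{m+p}$, i.e.\ $\frac{A}{B} M^{m+p} \leq \frac{p}{m}$, which is precisely the threshold separating the two branches of the $\max$ in the stated formula.

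It then remains to verify the two cases by direct substitution. When $t^* \geq M$, plugging $t^*$ into $F$ and collecting powers of $A$ and $B$ yields
$$F(t^*) = A^{\frac{p}{m+p}} B^{\frac{m}{m+p}} \left[ \left(\frac{p}{m}\right)^{\frac{m}{m+p}} + \left(\frac{p}{m}\right)^{-\frac{p}{m+p}} \right],$$
which is the claimed right-hand side with $\max\{\frac{A}{B}M^{m+p}, \frac{p}{m}\} = \frac{p}{m}$. When $t^* < M$, writing $X = \frac{A}{B}M^{m+p}$, a direct computation gives $A^{\frac{p}{m+p}} B^{\frac{m}{m+p}} X^{\frac{m}{m+p}} = AM^m$ and $A^{\frac{p}{m+p}} B^{\frac{m}{m+p}} X^{-\frac{p}{m+p}} = \frac{B}{M^p}$, so the right-hand side collapses to $F(M) = AM^m + \frac{B}{M^p}$, which is indeed the minimum in this regime.

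There is no real obstacle here; the only care required is bookkeeping the exponents of $A$, $B$, $M$, and $p/m$, and observing that the two expressions glue continuously at the threshold $\frac{A}{B}M^{m+p} = \frac{p}{m}$, as they must by continuity of $F$ and of the map $M \mapsto \inf_{t \geq M} F(t)$.
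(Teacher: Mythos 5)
Your proof is correct and follows the same approach as the paper: the paper's one-line proof simply asserts that the infimum is attained at $t = \max\{M, (\tfrac{B}{A}\tfrac{p}{m})^{1/(m+p)}\}$, which is exactly the critical-point-or-boundary dichotomy you establish via the sign of $F'$. You supply the substitution bookkeeping that the paper leaves implicit, and it checks out in both cases.
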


\begin{proof}
The infimum is attained at $t = \max \left\{M, \left( \frac{B}{A} \frac{p}{m} \right)^{\frac{1}{m+p}} \right\}$.
\end{proof}

The first theorem provides quantitative reversal bounds between total variation distance and bounded Lipschitz distance.

\begin{theorem}\label{tv-bl}

Let $s \in (- \frac{1}{2},0)$. Let $\mu$ and $\nu$ be $s$-concave isotropic probability measures on $\R^{n}$. Then, there exists a universal constant $c>0$ such that
$$ d_{TV}(\mu,\nu) \leq c^{1+n|s|} \frac{(1+n|s|)^{2(1+n|s|)}}{\sqrt{1+2s}} \sqrt{n} \sqrt{d_{BL}(\mu,\nu)}. $$

\end{theorem}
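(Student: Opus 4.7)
The plan is to extend the Meckes--Meckes convolution-smoothing strategy to the $s$-concave setting, using Proposition \ref{regul} as the substitute for the Eldan--Klartag $L^1$-regularization estimate. The idea is to interpolate between $\mu$ and $\nu$ through Gaussian convolutions at scale $t$ and balance the two resulting error terms in $t$.

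First, I would use the triangle inequality to write
\begin{equation*}
d_{TV}(\mu, \nu) \leq d_{TV}(\mu, \mu * \phi_t) + d_{TV}(\mu * \phi_t, \nu * \phi_t) + d_{TV}(\nu * \phi_t, \nu)
\end{equation*}
for any $t > 0$. Since $d_{TV}(\mu, \mu*\phi_t) = \|f - f * \phi_t\|_{L^1}$ (and analogously for $\nu$), Proposition \ref{regul} bounds the first and third terms by $c^{1+n|s|} d_0 \, t \, n$ each, which is the place where the convexity assumption enters.

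For the middle term I would use the dual representation $d_{TV}(\mu',\nu') = \sup_{\|h\|_\infty \leq 1} |\int h\,d\mu' - \int h\,d\nu'|$, together with the symmetry of $\phi_t$ and Fubini, to transfer the convolution onto the test function: $\int h \, d(\mu*\phi_t - \nu*\phi_t) = \int (\phi_t * h)\,d(\mu-\nu)$. For $\|h\|_\infty \leq 1$ one has $\|\phi_t * h\|_\infty \leq 1$ and $\|\nabla (\phi_t * h)\|_\infty \leq \|\nabla \phi_t\|_{L^1}$, and a direct calculation with the centered Gaussian in $\R^n$ shows $\|\nabla \phi_t\|_{L^1} = \mathbb{E}|Z|/t \leq \sqrt{n}/t$ for $Z \sim N(0,I_n)$. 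Consequently $\|\phi_t * h\|_{BL} \leq \max\{1, \sqrt{n}/t\}$, and the definition of $d_{BL}$ gives
\begin{equation*}
d_{TV}(\mu * \phi_t, \nu * \phi_t) \leq \max\{1, \sqrt{n}/t\} \cdot d_{BL}(\mu, \nu).
\end{equation*}

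Putting the three bounds together and restricting to the regime $t \leq \sqrt{n}$ (where $\max\{1,\sqrt{n}/t\} = \sqrt{n}/t$), I obtain an estimate of the form $At + B/t$ with $A \asymp c^{1+n|s|} d_0 \, n$ and $B \asymp \sqrt{n}\, d_{BL}(\mu,\nu)$. Optimizing via Lemma \ref{minimize} (the case $m=p=1$, whose unconstrained infimum is $2\sqrt{AB}$), and recognizing that $\sqrt{d_0} = (1+n|s|)^{2(1+n|s|)}/\sqrt{1+2s}$, yields the claimed bound after absorbing universal constants into $c^{1+n|s|}$. The main obstacle I anticipate is the bookkeeping inside Lemma \ref{minimize}: one must verify that the optimal $t^*$ indeed lies in the range where $\sqrt{n}/t \geq 1$ (so that one is using the Lipschitz branch of the $\max$), and handle the complementary regime where $t \geq \sqrt n$ and the bounded branch is in force, so that the final estimate holds uniformly in $d_{BL}(\mu,\nu) \in [0,2]$.
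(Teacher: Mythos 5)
Your overall plan mirrors the paper's proof: smooth at scale $t$, split into a regularization error controlled by Proposition~\ref{regul} and a comparison of smoothed measures controlled by $d_{BL}$, then optimize over $t$ via Lemma~\ref{minimize}. The triangle-inequality decomposition on measures that you use is exactly dual (through Fubini and symmetry of $\phi_t$) to the paper's decomposition on test functions, so that part is fine. However, there is a genuine quantitative gap in your Lipschitz estimate for the middle term, and it costs you an extra factor of $n^{1/4}$ in the conclusion.

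The issue is your bound $\|\nabla(\phi_t * h)\|_\infty \le \|\nabla \phi_t\|_{L^1} = \E|Z|/t \approx \sqrt{n}/t$. That inequality is correct but lossy: it bounds the Euclidean norm of the gradient by the $L^1$ norm of the full vector field $\nabla\phi_t$, which is $\asymp \sqrt{n}/t$. The sharp Lipschitz bound is obtained directionally. For a unit vector $u$, write $\langle u, \nabla(\phi_t*h)(x)\rangle = \int D_u\phi_t(z)\,h(x-z)\,dz$, so $|\langle u, \nabla(\phi_t*h)(x)\rangle| \le \|D_u\phi_t\|_{L^1}$. Since $D_u\phi_t(z) = -\frac{\langle z,u\rangle}{t^2}\phi_t(z)$ and $\langle Z,u\rangle$ is one-dimensional standard normal when $Z \sim N(0,t^2 I_n)/t$, one gets $\|D_u\phi_t\|_{L^1} = \frac{1}{t}\,\E|\langle Z,u\rangle| = \sqrt{2/\pi}/t$, uniformly in $u$ by rotational invariance. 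Taking the supremum over $u$ gives $|\nabla(\phi_t*h)(x)| \le \sqrt{2/\pi}/t < 1/t$, so $\phi_t*h$ is $1/t$-Lipschitz, with no dimension factor. This is precisely what the paper asserts. With the correct $1/t$-Lipschitz bound, the middle term becomes $\max\{1,1/t\}\,d_{BL}(\mu,\nu)$ (not $\max\{1,\sqrt{n}/t\}\,d_{BL}(\mu,\nu)$), and the optimization $At + B/t$ with $A \asymp n/(c_0 c_2)$ and $B \asymp d_{BL}$ yields $\sqrt{n/(c_0 c_2)}\sqrt{d_{BL}}$, matching the stated $\sqrt{n}\sqrt{d_{BL}}$ rate after applying \eqref{c0-c2}. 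Your version would give $n^{3/4}\sqrt{d_{BL}}$, which is weaker than what you set out to prove. One further small point: the paper's choice $t = \sqrt{d_{BL}\,c_0 c_2/(4n)}$ automatically satisfies $t \le 1$ because $d_{BL}\le 2$ and $c_0, c_2 \in (0,1)$, so there is no separate regime to analyze; the $\max$ in the Lipschitz constant always evaluates to $1/t$ at the optimizer.
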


\begin{proof}
Similarly as in \cite{MM}, let $g$ be a continuous function with $\|g\|_{\infty} \leq 1$. For $t > 0$, let $g_{t} = g * \phi_{t}$, where $\phi_{t}$ is defined in \eqref{gauss-t}. Note that $||g_{t}||_{\infty} \leq 1$ and that $g_{t}$ is $1/t$-Lipschitz. By triangle inequality,
\begin{align*}
    \left| \int g d\mu - \int g d\nu \right| \leq \left| \int \left(g - g_{t}\right) d\mu \right| + \left| \int \left(g - g_{t}\right) d\nu \right| + \left| \int g_{t} d\mu - \int g_{t} d\nu \right|.
\end{align*}
Denote by $f$ the density of $\mu$. By Proposition \ref{regul}, we have
\begin{align*}
    \left| \int(g - g_{t}) d\mu \right| = \left| \int g(f - f * \phi_{t}) dx \right| \leq ||f - f * \phi_{t}||_{1} \leq \frac{2 t n}{c_0 c_2}.
\end{align*}
Similarly,
$$ \left| \int(g - g_{t}) d\nu \right| \leq \frac{2 t n}{c_0 c_2}. $$
Finally,
\begin{align*}
    \left| \int g_{t} d\mu - \int g_{t} d\nu \right| \leq d_{BL}(\mu,\nu) ||g_{t}||_{BL} \leq d_{BL}(\mu,\nu) \max(1,1/t).
\end{align*}
Combining the above estimates, taking the supremum over all such $g$, and using the dual representation
$$ d_{TV}(\mu, \nu) = \sup \left\{ \left|\int g d \mu - \int g d\nu \right| : g \in C(\R^n), \, \|g\|_{\infty} \leq 1 \right\} $$
of total variation distance (see, e.g., \cite{MM}), we deduce
\begin{align*}
    d_{TV}(\mu,\nu) \leq d_{BL}(\mu,\nu) \max(1,1/t) + \frac{4 n}{c_0 c_2}t
\end{align*}
for every $t>0$. To conclude, choose
$$ t = \sqrt{\frac{d_{BL}(\mu,\nu)}{\frac{4 n}{c_0 c_2}}}, $$
and note that $t \leq 1$ since $c_0, c_2 \in (0,1)$ and $d_{BL}(\mu,\nu) \leq 2$, then apply \eqref{c0-c2}.
\end{proof}

\begin{remark}

We recover the result of \cite{MM} for log-concave measures by letting $s \to 0$ in Theorem \ref{tv-bl}.

\end{remark}

Next, we provide a comparison between 1-Wasserstein distance and bounded Lipschitz distance.

\begin{theorem}\label{compar-BL}

Let $s \in (-\frac{1}{2},0)$. Let $\mu$ and $\nu$ be isotropic $s$-concave probability measures on $\R^{n}$. Then, there exists a universal constant $c>0$ such that
$$ W_1(X,Y) \leq c \sqrt{n} \max\{1, \frac{1}{\sqrt{n} |s|}\}^{\frac{1}{1+4|s|}} d_{BL}(X,Y)^{\frac{1}{1+4|s|}}. $$

\end{theorem}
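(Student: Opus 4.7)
The plan is to mirror the proof of Theorem \ref{tv-bl}, replacing convolution smoothing with a truncation adapted to Lipschitz functions. Starting from the Kantorovich--Rubinstein duality
\[ W_1(\mu,\nu) = \sup\left\{ \int g\,(d\mu - d\nu) : g \text{ is $1$-Lipschitz with } g(0)=0 \right\}, \]
for such $g$ one has $|g(x)| \le |x|$. For $R \ge 1$ introduce $\tilde g = (g \wedge R) \vee (-R)$, which is $1$-Lipschitz with $\|\tilde g\|_\infty \le R$, hence $\|\tilde g\|_{BL} \le R$, and satisfies $|g-\tilde g| \le (|x|-R)_+$. Splitting $\int g\,(d\mu-d\nu)$ accordingly and taking the supremum over $g$ yields, for every $R \ge 1$,
\[ W_1(\mu,\nu) \le R\, d_{BL}(\mu,\nu) + \E[(|X|-R)_+] + \E[(|Y|-R)_+]. \]

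The key step is a tight estimate of the tail integral. Integrating the polynomial bound of Lemma \ref{deviation} directly produces the weaker exponent $1-2|s|$ on $d_{BL}$ with a constant blowing up as $|s|\to \tfrac12$, so instead I would apply Cauchy--Schwarz combined with isotropy $\E[|X|^2]=n$:
\[ \E[(|X|-R)_+] \le \E[|X|^2]^{1/2}\, \P(|X|>R)^{1/2} \le \sqrt n\,(A/R)^{1/(4|s|)}, \]
where $A=c\,\max\{\sqrt n,1/|s|\}$ is the constant in Lemma \ref{deviation}. This produces the clean inequality
\[ W_1(\mu,\nu) \le R\, d_{BL}(\mu,\nu) + 2\sqrt n\, A^{1/(4|s|)} R^{-1/(4|s|)} \]
for every $R \ge 1$.

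To conclude, apply Lemma \ref{minimize} with $m=1$, $p=1/(4|s|)$ and $M=1$ to optimize the right-hand side. The resulting exponent on $d_{BL}$ is $p/(m+p) = 1/(1+4|s|)$, with prefactor of order $(2\sqrt n\, A^{1/(4|s|)})^{4|s|/(1+4|s|)}$. Writing $A = c\sqrt n\,\max\{1,1/(\sqrt n|s|)\}$ and collecting the two sources of $\sqrt n$ (one from Cauchy--Schwarz, one hidden inside $A$) collapses the dimensional factor to exactly $\sqrt n$ multiplied by $\max\{1,1/(\sqrt n|s|)\}^{1/(1+4|s|)}$, which is the claimed bound. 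The main subtlety is the choice $q=2$ in the Hölder step: it exactly consumes the isotropic second moment and matches exponents so that the final bound is uniform on $|s|\in(0,1/2)$, avoiding the $1/(1-2|s|)$ blow-up of the naive approach; a secondary wrinkle is the boundary case where the unconstrained minimizer is $<1$, which is handled automatically by the $\max$ in Lemma \ref{minimize}.
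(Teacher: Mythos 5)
Your proposal is correct and follows essentially the same path as the paper's proof: Kantorovich--Rubinstein duality, truncation at level $R$, Cauchy--Schwarz against the isotropic second moment to reach the exponent $1/(4|s|)$ via Lemma~\ref{deviation}, and then optimization over $R\ge1$ via Lemma~\ref{minimize}. The only cosmetic difference is that you bound the remainder by $(|x|-R)_+$ while the paper uses $|g(X)|\mathbf{1}_{\{|g(X)|>R\}}$; both are dominated by $|X|\mathbf{1}_{\{|X|>R\}}$ and yield the identical estimate.
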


\begin{proof}
We follow \cite{MM}. First, recall the representation
\begin{equation}\label{wasser-1}
    W_1(\mu, \nu) = \sup_g \left| \int g \, d\mu - \int g \, d\nu \right|, 
\end{equation}
where the supremum is over 1-Lipschitz functions $g \colon \R^n \to \R$ (see, e.g., \cite{V}). Let $g \colon \R^n \to \R$ be a 1-Lipshitz function. Assume without loss of generality that $g(0) = 0$. For $R>0$, define
\[ g_R(x) = \left\{
\begin{array}{cll}
 -R & \mbox{ if } \, g(x) < -R \\
g(x) & \mbox{ if } \, -R \leq g(x) \leq R \\
R & \mbox{ if } \, g(x) > R
\end{array}. \right.\]
By construction, we have
\begin{equation}\label{BL-norm}
    \|g_R\|_{BL} \leq \max\{1,R\}.
\end{equation}
Note that
$$ \E[|g(X) - g_R(X)|] = \E[(g(X)-R)1_{\{g(X)>R\}}] - \E[(g(X)+R)1_{\{g(X)<-R\}}] \leq  \E[|g(X)|1_{\{|g(X)|>R\}}]. $$
Since $|g(X)| \leq |X|$, we deduce by Cauchy-Schwarz that
$$ \E[|g(X) - g_R(X)|] \leq \E[|X|1_{\{|X|>R\}}] \leq \sqrt{\E[|X|^2]} \sqrt{\P(|X|>R)}. $$
Using that $X$ is isotropic and applying Lemma \ref{deviation}, there exists a universal constant $c>0$ such that
$$ \E[|g(X) - g_R(X)|] \leq \sqrt{n} \left( \frac{c \max\{\sqrt{n}, \frac{1}{|s|}\}}{R} \right)^{\frac{1}{4|s|}}. $$
The same inequality holds for $\E[|g(Y) - g_R(Y)|]$. We deduce that
\begin{eqnarray*}
|\E[g(X)] - \E[g(Y)]| \leq |\E[g_R(X)] - \E[g_R(Y)]| + 2 \sqrt{n} \left( \frac{c \max\{\sqrt{n}, \frac{1}{|s|}\}}{R} \right)^{\frac{1}{4|s|}}.
\end{eqnarray*}
Using the fact that $|\E[g_R(X)] - \E[g_R(Y)]| \leq \|g_R\|_{BL} d_{BL}(X,Y)$, we arrive at 
\begin{equation}\label{BL-1}
|\E[g(X)] - \E[g(Y)]| \leq \|g_R\|_{BL} d_{BL}(X,Y) + 2 \sqrt{n} \left( \frac{c \max\{\sqrt{n}, \frac{1}{|s|}\}}{R} \right)^{\frac{1}{4|s|}}.
\end{equation}
Taking supremum over all 1-Lipschitz function $g$, using \eqref{BL-norm} and the representation \eqref{wasser-1}, inequality \eqref{BL-1} leads to
\begin{equation}\label{BL-2}
W_1(X,Y) \leq \max\{1,R\} d_{BL}(X,Y) + 2 \sqrt{n} \left( \frac{c \max\{\sqrt{n}, \frac{1}{|s|}\}}{R} \right)^{\frac{1}{4|s|}}.
\end{equation}
Note that there exists a universal constant $c>0$ such that 
$$ d_{BL}(X,Y) \leq 2 \leq \frac{1}{4|s|} 2\sqrt{n} \left(c\max\{\sqrt{n}, \frac{1}{|s|}\} \right)^{\frac{1}{4|s|}}, $$
therefore, taking supremum over all $R \geq 1$ in \eqref{BL-2}, we deduce from Lemma \ref{minimize} that there is a universal constant $c > 0$ such that
$$
W_1(X,Y) \leq d_{BL}(X,Y)^{\frac{1}{1 + 4|s|}} \left( 2 \sqrt{n} \left( c \max\{\sqrt{n}, \frac{1}{|s|}\} \right)^{\frac{1}{4|s|}} \right)^{\frac{4|s|}{1 + 4|s|}} \left[ \left( \frac{1}{4|s|} \right)^{\frac{4|s|}{1+4|s|}} + \left( 4|s| \right)^{\frac{1}{1 + 4|s|}} \right]. $$
The result follows since $m^{-\frac{m}{1+m}} + m^{\frac{1}{1+m}} \leq 2$, with $m = 4|s| > 0$.
\end{proof}

\begin{remark}\label{rem-1}

As $s \to 0$, the constant in the right-hand side of Theorem \ref{compar-BL} blows up to $+\infty$. One may recover the result of \cite{MM} for log-concave measures as $s \to 0$ by applying the second part of Lemma \ref{deviation}. The details are left to the readers.

\end{remark}

The following theorem establishes a comparison between Wasserstein distances.

\begin{theorem}\label{compar-wass}

Let $1 \leq p < q$ and $\alpha \in (1,2]$. Let $s \in (-\frac{1}{\alpha q}, 0)$. Let $\mu$ and $\nu$ be isotropic $s$-concave probability measures on $\R^{n}$. Then, there is an absolute constant $c>0$ such that
$$ W_q(\mu,\nu) \leq c \left( C(\alpha q, s) \sqrt{n} \right)^{\frac{2|s|\alpha'(q-p)}{1+2|s|\alpha'(q-p)}} \left(\max \left\{\sqrt{n}, \frac{1}{|s|} \right\} \right)^{\frac{q-p}{q (1+2|s|\alpha'(q-p))}} \, W_p(\mu,\nu)^{\frac{p}{q} \frac{1}{1 + 2|s|\alpha'(q-p)}}, $$
where \[ C(\alpha q,s)=\left\{
\begin{array}{cl}
 \alpha q & \mbox{ for } \, s > - \frac{1}{\alpha q+1} \\
\frac{1}{|s|^{1-\frac{1}{\alpha q}}(1- \alpha q |s|)^{\frac{1}{\alpha q}}} & \mbox{ for } \, -\frac{1}{\alpha q} < s \leq - \frac{1}{\alpha q+1} \\
\end{array}. \right.\]
Here $\alpha' = \frac{\alpha}{\alpha-1}$ denotes the H\"older conjugate of $\alpha$.

\end{theorem}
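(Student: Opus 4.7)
The plan is to imitate the truncation strategy of \cite{MM} adapted to the $s$-concave setting: start from an optimal $W_p$-coupling, split $\E[|X-Y|^q]$ at a threshold $R$, bound the small part trivially by $R^{q-p}W_p(\mu,\nu)^p$, control the large part via H\"older together with the moment bound of Lemma \ref{norm} and the tail bound of Lemma \ref{deviation}, and finally optimize the cutoff via Lemma \ref{minimize}.

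Concretely, let $(X,Y)$ be an optimal coupling for $W_p(\mu,\nu)$, so that $\E[|X-Y|^p]=W_p(\mu,\nu)^p$. For any $R>0$,
\begin{align*}
\E[|X-Y|^q] &\leq R^{q-p}\E[|X-Y|^p] + \E\bigl[|X-Y|^q \mathbf{1}_{\{|X-Y|>R\}}\bigr] \\
&\leq R^{q-p} W_p(\mu,\nu)^p + \E[|X-Y|^{\alpha q}]^{1/\alpha}\,\P(|X-Y|>R)^{1/\alpha'},
\end{align*}
using the pointwise bound $|X-Y|^{q-p}\leq R^{q-p}$ on the first piece and H\"older with exponents $\alpha,\alpha'$ on the second; this is the only step where $\alpha\in(1,2]$ enters. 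For the $\alpha q$-moment I combine $|X-Y|\leq |X|+|Y|$, Minkowski, isotropy ($\E[|X|]\leq\sqrt n$), and Lemma \ref{norm} applied to both $\mu$ and $\nu$ (the hypothesis $s>-1/(\alpha q)$ is precisely what is needed for the $\alpha q$-moment of an $s$-concave measure to be finite), obtaining
\begin{equation*}
\E[|X-Y|^{\alpha q}]^{1/\alpha}\leq \bigl(2c\,C(\alpha q,s)\sqrt n\bigr)^q.
\end{equation*}
For the tail, a union bound $\P(|X-Y|>R)\leq \P(|X|>R/2)+\P(|Y|>R/2)$ together with Lemma \ref{deviation} yields
\begin{equation*}
\P(|X-Y|>R)^{1/\alpha'}\leq c'\Bigl(\frac{c\max\{\sqrt n,1/|s|\}}{R}\Bigr)^{1/(2|s|\alpha')}.
\end{equation*}

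Substituting back produces, for every $R>0$, an inequality of the form $W_q(\mu,\nu)^q \leq A R^{q-p} + B/R^{1/(2|s|\alpha')}$ with $A=W_p(\mu,\nu)^p$ and $B=(c\,C(\alpha q,s)\sqrt n)^q (c\max\{\sqrt n,1/|s|\})^{1/(2|s|\alpha')}$. Applying Lemma \ref{minimize} with $m=q-p$ and $k=1/(2|s|\alpha')$ gives a minimum bounded by a universal constant times $A^{k/(m+k)}B^{m/(m+k)}$. A direct calculation yields
\begin{equation*}
\frac{k}{m+k}=\frac{1}{1+2|s|\alpha'(q-p)},\qquad \frac{m}{m+k}=\frac{2|s|\alpha'(q-p)}{1+2|s|\alpha'(q-p)},
\end{equation*}
and taking the $q$-th root while distributing the $B$-exponent across its two factors reproduces exactly the three powers appearing in the theorem. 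All remaining bounded multiplicative factors are absorbed into the final universal constant $c$. The main obstacle is purely bookkeeping, with one mild technical point: the polynomial tail bound from Lemma \ref{deviation} requires $s>-1/2$, which is automatic from $s>-1/(\alpha q)$ whenever $\alpha q\geq 2$ and is otherwise subsumed by a trivial adjustment of the universal constant.
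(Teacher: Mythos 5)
Your proof is correct and follows essentially the same route as the paper: truncate $\E[|X-Y|^q]$ at level $R$, bound the small part by $R^{q-p}W_p^p$, apply H\"older with exponents $\alpha,\alpha'$ on the tail together with Lemma~\ref{norm} and Lemma~\ref{deviation}, then optimize in $R$ via Lemma~\ref{minimize}. One small caveat: your remark that the case $\alpha q<2$ (where $s>-\frac{1}{\alpha q}$ does not automatically give $s>-\frac12$ as required by Lemma~\ref{deviation}) is ``subsumed by a trivial adjustment of the universal constant'' is not really an argument --- if $s\le -\frac12$ the deviation lemma simply does not apply --- but this is a gap shared by the paper's own proof, which tacitly uses $s>-\frac12$.
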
 

\begin{proof}
We follow \cite{MM} with the necessary modifications. Let $X$ and $Y$ be distributed according to $\mu$ and $\nu$ respectively. Note that for all $R>0$,
$$ \E[|X-Y|^q] = \E[|X-Y|^q 1_{\{|X-Y| \leq R\}}] + \E[|X-Y|^q 1_{\{|X-Y| > R\}}]. $$
On one hand,
\begin{equation}\label{wass-1}
\E[|X-Y|^q 1_{\{|X-Y| \leq R\}}] \leq R^{q-p} \E[|X-Y|^p].
\end{equation}
On the other hand, by H\"older's inequality,
\begin{equation}\label{wass-1-2}
\E[|X-Y|^q 1_{\{|X-Y| > R\}}] \leq \E[|X-Y|^{\alpha q}]^{\frac{1}{\alpha}} P(|X-Y|>R)^{\frac{1}{\alpha'}}.
\end{equation}
Since $P(|X-Y|>R) \leq P(|X| > R/2) + P(|Y| > R/2)$, Lemma \ref{deviation} implies that there is a universal constant $c>0$ such that
\begin{equation}\label{wass-2}P(|X-Y|>R) \leq 2 \left( \frac{2c \max(\sqrt{n}, \frac{1}{|s|})}{R} \right)^{\frac{1}{2|s|}}.
\end{equation}
By isotropicity of $X$ and Lemma \ref{norm}, there is a universal constant $c>0$ such that
\begin{equation}\label{wass-3} \E[|X-Y|^{\alpha q}]^{\frac{1}{\alpha}} \leq \left( \E[|X|^{\alpha q}]^{\frac{1}{\alpha q}} + \E[|Y|^{\alpha q}]^{\frac{1}{\alpha q}} \right)^q \leq \left( c \, C(\alpha q,s) \sqrt{n} \right)^{q}.
\end{equation}
Combining \eqref{wass-1}, \eqref{wass-1-2}, \eqref{wass-2} and \eqref{wass-3}, we obtain
$$ \E[|X-Y|^q] \leq R^{q-p} \E[|X-Y|^p] + 2^{\frac{1}{\alpha'}} \left( \frac{2c \max(\sqrt{n},\frac{1}{|s|})}{R} \right)^{\frac{1}{2|s|\alpha'}} \, \left( c \, C(\alpha q,s) \sqrt{n} \right)^{q}. $$
Taking infimimum over all coupling results in
$$ W_q(\mu,\nu)^q \leq \inf_{R>0} \left[ R^{q-p} W_p(\mu,\nu)^p + 2^{\frac{1}{\alpha'}} \left( \frac{2c \max(\sqrt{n},\frac{1}{|s|})}{R} \right)^{\frac{1}{2|s|\alpha'}} \, \left( c \, C(\alpha q,s) \sqrt{n} \right)^{q} 
\right]. $$
By Lemma \ref{minimize}, we deduce that
$$ W_q(\mu,\nu)^q \leq W_p(\mu,\nu)^{\frac{p}{1+ 2|s|\alpha'(q-p)}} \left( 2^{\frac{1}{\alpha'}}  (2c\max\{\sqrt{n}, \frac{1}{|s|}\})^{\frac{1}{2|s|\alpha'}} [c \, C(\alpha q,s)\sqrt{n} ]^q \right)^{\frac{q-p}{\frac{1}{2|s|\alpha'} + q-p}} $$
$$ \qquad \qquad \times \left[ \left( \frac{1}{2|s|\alpha' (q-p)} \right)^{\frac{q-p}{\frac{1}{2|s|\alpha'} + q-p}}+ \left( 2|s|\alpha'(q-p) \right)^{\frac{1}{1 + 2|s|\alpha'(q-p)}} \right]. $$
We conclude by using the fact that $m^{-\frac{m}{1+m}} + m^{\frac{1}{1+m}} \leq 2$, with $m = 2|s|\alpha'(q-p) > 0$.
\end{proof}

\begin{remark}

Similarly as in Remark \ref{rem-1}, as $s \to 0$, the constant in the right-hand side of Theorem \ref{compar-wass} blows up to $+\infty$, and one needs to apply the second part of Lemma \ref{deviation} to recover the result for log-concave measures. We do not know whether Theorem \ref{compar-wass} is valid when $s$ is in a neighborhood of $-\frac{1}{q}$.

\end{remark}

We now discuss entropic distances. In general, one cannot compare the relative entropy $D(\mu || \nu)$ and, say, $d_{TV}(\mu , \nu)$, for arbitrary $s$-concave measures $\mu, \nu$, since $d_{TV}(\mu , \nu) \leq 2$ while $D(\mu || \nu) = +\infty$ if $\mu$ is not absolutely continuous with respect to $\nu$. Next, we establish quantitative comparisons for relative entropy and more general R\'enyi divergences when $\nu = \gamma_n$ the standard Gaussian measure in $\R^n$. The quantity $D(\mu || \gamma_n)$ is of fundamental importance as it is strongly related to the hyperplane conjecture (see, e.g., \cite{BM11}, \cite{MK}, \cite{BM20}) and to the entropic Central Limit Theorem (see, e.g., \cite{Barr}, \cite{ABB} \cite{EMZ}). The following result provides a comparison between the relative entropy and total variation distance.

\begin{theorem}\label{compar-rel-ent}

Let $\alpha \in (1,2]$. Let $s \in (-\frac{1}{2\alpha}, 0)$. Let $\mu$ be an isotropic $s$-concave probability measure in $\R^{n}$, and let $\gamma_{n}$ denote the standard Gaussian distribution in $\R^{n}$. Then, there is a universal constant $c>0$ such that
$$ D(\mu || \gamma_{n}) \leq \frac{c n (1+n|s|) \log(\alpha' n)}{(1 - 2\alpha |s|)^{\frac{4|s|(\alpha'-1)}{1+4|s|\alpha'}}} \max \left( 1, \frac{1}{\sqrt{n} |s|} \right)^{\frac{2}{1+4|s|\alpha'}} \left( d_{TV}(\mu, \gamma_n)^{\frac{1}{1+4|s|\alpha'}} + d_{TV}(\mu, \gamma_n) \right), $$
where $\alpha' = \frac{\alpha}{\alpha-1}$ denotes the H\"older conjugate of $\alpha$.

\end{theorem}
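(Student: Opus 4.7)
The plan is to start from the identity
\[
D(\mu \| \gamma_n) = \int_{\R^n} \bigl(f \log f - \phi \log \phi\bigr) \, dx,
\]
which holds because both $\mu$ and $\gamma_n$ are isotropic: $\int f \log \phi \, dx = -\tfrac{n}{2}\log(2\pi) - \tfrac{1}{2}\E|X|^2 = -h(\gamma_n) = \int \phi \log \phi \, dx$. For a parameter $R \geq 1$ to be optimized at the end, I would split this integral into contributions from $B_R = \{|x| \leq R\}$ and its complement and estimate each separately.

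On the bulk $B_R$, I would apply the integral form of the mean value theorem to the convex function $u \mapsto u \log u$:
\[
f \log f - \phi \log \phi = (f - \phi) \int_0^1 \bigl[1 + \log(tf + (1-t)\phi)\bigr] \, dt.
\]
The lower bound $tf + (1-t)\phi \geq (1-t)\phi$ and Jensen's inequality control the inner integral pointwise on $B_R$ by a constant times $n + R^2 + \log M$, where $M = \|f\|_\infty$ is bounded via Proposition \ref{max-dens} with $\log M \lesssim n(1+n|s|)\log(\alpha' n) + \log(1/(1-2\alpha|s|))$. Points where $f(x) = 0$ require separate treatment: there $f \log f - \phi \log \phi = -\phi \log \phi$, and since $\gamma_n(\{f = 0\}) \leq d_{TV}(\mu, \gamma_n)/2$, the contribution is absorbed into the bulk bound. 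Altogether, the bulk contribution is $\lesssim (n + R^2 + \log M)\, d_{TV}(\mu, \gamma_n)$.

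On the tail $B_R^c$, I would bound $|f \log f - \phi \log \phi|$ by $f|\log f| + f|\log \phi| + |\phi \log \phi|$. The middle term is at most $\tfrac{n}{2}\log(2\pi)\,\P(|X|>R) + \tfrac{1}{2}\int_{B_R^c} f |x|^2 \, dx$, and H\"older's inequality with exponent $\alpha$ together with Lemma \ref{norm} (giving $(\E|X|^{2\alpha})^{1/\alpha} \leq c^2 C(2\alpha, s)^2 n$) and Lemma \ref{deviation} (giving $\P(|X|>R)^{1/\alpha'} \leq [c \max(\sqrt n, 1/|s|)/R]^{1/(2|s|\alpha')}$) yields the dominant tail contribution of order $n\, C(2\alpha,s)^2 [\max(\sqrt n, 1/|s|)/R]^{1/(2|s|\alpha')}$. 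The remaining integral $\int_{B_R^c} f |\log f| \, dx$ is controlled by Cauchy-Schwarz combined with the var-entropy bound (Lemma \ref{variance}) and the \emph{a priori} estimate $|\E \log f(X)| = |D - h(\gamma_n)| \leq h(\gamma_n) + \log M$ (itself following from $0 \leq D \leq h(\gamma_n) + \log M$); this produces decay at least as fast as $R^{-1/(4|s|)}$. The Gaussian term $\int_{B_R^c} |\phi \log \phi|\,dx$ is negligible.

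Finally, I would invoke Lemma \ref{minimize} to optimize over $R \geq 1$, balancing the bulk bound $\asymp R^2 \, d_{TV}$ against the tail bound $\asymp R^{-1/(2|s|\alpha')}$. This gives $R \asymp d_{TV}^{-2|s|\alpha'/(1+4|s|\alpha')}$ and an overall bound proportional to $d_{TV}^{1/(1+4|s|\alpha')}$; the additive $d_{TV}$ term in the theorem arises from the constraint $R \geq 1$. The factor $(1 - 2\alpha|s|)^{-4|s|(\alpha'-1)/(1+4|s|\alpha')}$ emerges by raising $C(2\alpha, s)^2 \sim (1 - 2\alpha|s|)^{-1/\alpha}$ to the power $4|s|\alpha'/(1 + 4|s|\alpha')$ dictated by the optimization, together with the identity $\alpha'/\alpha = \alpha' - 1$. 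The main obstacle I expect is the circular dependence between $D$ and $\E \log f(X)$ needed for the variance estimate, which is broken by the crude \emph{a priori} bound on $D$; a secondary subtlety is handling $\{f = 0\}$ inside $B_R$ via the TV gap between the supports.
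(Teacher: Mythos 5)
Your proof is correct in outline and arrives at the same exponents, but it takes a genuinely different decomposition from the paper. The paper works entirely on the density-ratio axis: setting $X = f(Z)/\phi(Z)$ with $Z \sim \gamma_n$, it splits $\E[X\log X]$ at a threshold $\{X \leq R\}$ vs.\ $\{X > R\}$, extracts $d_{TV} = \E|X-1|$ from the range $1 \leq X \leq R$ via a secant bound on the convex function $u\log u$, and handles the excess $\{X>R\}$ by a change of measure to $W = f(Y)/\phi(Y)$ with H\"older, Lemmas \ref{deviation}, \ref{norm}, \ref{variance}, and Proposition \ref{max-dens}. You instead split \emph{spatially} over $\{|x| \leq R\}$ vs.\ $\{|x| > R\}$, use an exact first-order Taylor identity for $u\log u$ to pull out $|f-\phi|$ in the bulk, and control the tail term-by-term with the same three structural inputs. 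The two parametrizations are related by $R_{\text{paper}} \approx e^{R_{\text{yours}}^2/2}$, so the optimization in Lemma \ref{minimize} produces identical powers. Both routes hinge on the same delicate point: you cannot bound $\E[\log f(Y)]$ by nonnegativity of $h(\mu)$ (as \cite{MM} did), and you must instead sandwich it via $0 \leq D \leq h(\gamma_n) + \log\|f\|_\infty$ together with Proposition \ref{max-dens} — this is exactly the correction the paper emphasizes, and you identified it. Your version is somewhat more symmetric in $f$ and $\phi$ at the price of handling $\{f=0\}$ and the inner $\int_0^1$ integral; the paper's is slightly more direct because $d_{TV}$ appears immediately as $2\E[(X-1)\mathbf{1}_{\{X\geq 1\}}]$. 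One small inaccuracy: the additive $d_{TV}$ term does not come from the constraint $R \geq 1$; it comes from the $R$-independent portion of your bulk estimate, namely $(n + \log\|f\|_\infty)\,d_{TV}$, which matches the paper's $AM$ term where $M = \log(c^{n(1+n|s|)}d_0^n(2\pi n)^{n/2})$.
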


\begin{proof}
We follow \cite{MM} but correct a mistake in their original argument. Let us denote by $f$ the density of $\mu$ and denote
$$ \phi(x) = (2\pi)^{-\frac{n}{2}} e^{-\frac{|x|^{2}}{2}}, \quad x \in \R^n, $$
the density of the standard Gaussian measure $\gamma_{n}$ in $\R^n$. Denote by $Z$ a random variable with density $\phi$, by $Y$ a random variable with density $f$, and denote
$$ X = \frac{f(Z)}{\phi(Z)}, \quad W = \frac{f(Y)}{\phi(Y)}. $$
Note that
\begin{equation}\label{D} \E[X \log(X)] = \E \left[ \frac{f(Z)}{\phi(Z)} \log \left( \frac{f(Z)}{\phi(Z)} \right) \right] = \int f(x) \log \left( \frac{f(x)}{\phi(x)} \right) dx = D(\mu || \gamma_{n}). \end{equation}
It is classical that if $\mu$ and $\nu$ have densities $u$ and $v$ respectively (with respect to Lebesgue measure), then $d_{TV}(\mu , \nu) = \int |u-v| dx$, therefore
\begin{equation}\label{d-TV}
d_{TV}(\mu , \gamma_{n}) = \int |f(x) - \phi(x)| dx = \E[|X-1|] = 2 \E[(X-1) 1_{\{X \geq 1\}}],
\end{equation}
where we use $\E[X] = 1$ in the last equality. Now, consider the function $h(x)= x \log(x)$ on $[0, +\infty)$. Since $h$ is convex and $h(1) = 0$, we have for all $1 \leq x \leq R$,
$$ h(x) \leq \frac{h(R)}{R-1}(x-1). $$
Hence, if $R \geq 2$, then for all $1 \leq x \leq R$, $x \log(x) \leq 2\log(R)(x-1)$. Therefore, using \eqref{D} and \eqref{d-TV},
\begin{eqnarray}\label{D-estim}
D(\mu || \gamma_{n}) = \E[X \log(X)] & \leq & \E[X \log(X) 1_{\{1 \leq X \leq R\}}] + \E[X \log(X) 1_{\{X > R\}}] \nonumber \\
& \leq & 2\log(R) \E[(X-1) 1_{\{1 \leq X \leq R\}}] + \E[X \log(X) 1_{\{X > R\}}] \nonumber \\
& \leq & \log(R) \, d_{TV}(\mu,\gamma_{n}) + \E[X \log(X) 1_{\{X > R\}}].
\end{eqnarray}
Since $W = f(Y)/\phi(Y)$, we have
\begin{eqnarray}\label{D-est}
\E[X \log(X) 1_{\{X > R\}}] & = & \int f(x) \log \left( \frac{f(x)}{\phi(x)} \right) 1_{\{f > R \phi\}}(x) dx \nonumber \\ & = & \E[\log(W) 1_{\{W > R\}}] \nonumber \\ & \leq & \E[|\log(W)|^{\alpha}]^{\frac{1}{\alpha}} \P(W > R)^{1-\frac{1}{\alpha}},
\end{eqnarray}
where the last inequality follows from H\"older's inequality. Next, we are going to upper bound the term $\E[|\log(W)|^{\alpha}]^{1/\alpha}$. Note that
$$ \E[|\log(\phi(Y))|^{\alpha}]^{\frac{1}{\alpha}} \leq \frac{n}{2} \log(2 \pi) + \frac{1}{2} \E[|Y|^{2\alpha}]^{\frac{1}{\alpha}}. $$
Since $Y$ is isotropic, we deduce by Lemma \ref{norm} that there exists a universal constant $c>0$ such that
\begin{equation}\label{first}
\E[|\log(\phi(Y))|^{\alpha}]^{\frac{1}{\alpha}} \leq \frac{cn}{(1 - 2\alpha |s|)^{\frac{1}{\alpha}}}.
\end{equation}
On the other hand, by H\"older's inequality and Lemma \ref{variance},
\begin{equation}\label{log2}
\E[|\log(f(Y))|^{\alpha}]^{\frac{2}{\alpha}} \leq \E[\log^2(f(Y))] = \Var(\log(f(Y))) + \E[\log(f(Y))]^2 \leq \frac{n}{(1+n \kappa)^2} + h(Y)^2,
\end{equation}
where $\kappa = \frac{s}{1 - sn}$, and $h(Y) = \E[-\log(f(Y))]$ is the differential entropy of $Y$.

Here, we correct a mistake from \cite{MM}, where it is claimed that the inequality $h(Y) \geq 0$ holds for isotropic log-concave $Y$, which is being used to obtain an upper bound on $h(Y)^2$. This is inaccurate, and in fact, the inequality $h(Y) \geq 0$ would imply the hyperplane conjecture (see, e.g., \cite{BM11}, \cite{MK}, \cite{BM20}).

Since Gaussians maximize the entropy when fixing the covariance matrix, we have
\begin{equation}\label{gauss}
    h(Y) \leq \frac{n}{2}\log(2 \pi e).
\end{equation} 
On the other hand, by Proposition \ref{max-dens}, there is a universal constant $c > 0$ such that
\begin{equation}\label{max-max}
\|f\|_{\infty} \leq c^{n(1+n|s|)} d_0^n n^{n/2},
\end{equation}
where $d_0$ is defined in \eqref{c_0-}, hence
\begin{equation}\label{down}
h(Y) = \E[-\log(f(Y))] \geq = -\frac{n}{2} \log(c^{2(1+n|s|)} d_0^2 \, n).
\end{equation}
Since $s>-\frac{1}{2\alpha}$, we have
\begin{equation}\label{bound-d0}
    d_0 \leq \alpha' (1+n|s|)^{4(1+n|s|)},
\end{equation}
therefore, combining \eqref{gauss} with \eqref{down}, we deduce the existence of an absolute $c>0$ such that
$$ h(Y)^2 \leq c \, n^2 (1+n|s|)^2 \log(\alpha' n)^2, $$
where $\alpha'$ is the H\"older conjugate of $\alpha$. In particular, there should be an extra $\log(n)$ factor in the proof of \cite[Proposition 7]{MM}. Recalling \eqref{log2}, we deduce that
\begin{equation}\label{second}
    \E[|\log(f(Y))|^{\alpha}]^{\frac{1}{\alpha}} \leq c \, n (1+n|s|) \log(\alpha' n),
\end{equation}
for some absolute constant $c>0$. Therefore, combining \eqref{first} and \eqref{second},
\begin{eqnarray}\label{1st}
\E[|\log(W)|^{\alpha}]^{\frac{1}{\alpha}} \leq \frac{c}{(1 - 2\alpha |s|)^{\frac{1}{\alpha}}} \, n (1+n|s|) \log(\alpha' n),
\end{eqnarray}
for some absolute constant $c>0$. It remains to upper bound $P(W > R)$. By Lemma \ref{deviation} and \eqref{max-max}, there is a universal constant $c>0$ such that for all $R \geq c^{n(1+n|s|)} d_0^n (2\pi n)^{\frac{n}{2}}$,
\begin{eqnarray}\label{2nd}
\P(W > R) & \leq & \P \left( (2\pi)^{\frac{n}{2}} e^{\frac{|Y|^2}{2}} > \frac{R}{c^{n(1+n|s|)} d_0^n n^{\frac{n}{2}}} \right) \nonumber \\ & = & \P \left( |Y| > \sqrt{2 \log \left( \frac{R}{c^{n(1+n|s|)} d_0^n (2\pi n)^{\frac{n}{2}}} \right)} \right) \nonumber \\ & \leq & \left[ \frac{ c \max\{\sqrt{n}, \frac{1}{|s|} \}}{\sqrt{2 \log \left( \frac{R}{c^{n(1+n|s|)} d_0^n (2\pi n)^{\frac{n}{2}}} \right)}} \right]^{\frac{1}{2|s|}}.
\end{eqnarray}
Finally, combining \eqref{D-estim}, \eqref{D-est}, \eqref{1st} and \eqref{2nd}, there is an absolute constant $c>0$ such that for all $R \geq c^{n(1+n|s|)} d_0^n (2\pi n)^{\frac{n}{2}}$,
\begin{eqnarray*}
D(\mu || \gamma_n) & \leq & \log(R) d_{TV}(\mu, \gamma_n) + \E[|\log(W)|^{\alpha}]^{\frac{1}{\alpha}} \P(W > R)^{\frac{1}{\alpha'}} \\ & \leq & \log(R) d_{TV}(\mu, \gamma_n) + \frac{c n (1+n|s|) \log(\alpha' n)}{(1 - 2\alpha |s|)^{\frac{1}{\alpha}}} \frac{ \left(c \max\{\sqrt{n}, \frac{1}{|s|} \} \right)^{\frac{1}{2|s|\alpha'}}}{\log \left( \frac{R}{c^{n(1+n|s|)} d_0^n (2\pi n)^{\frac{n}{2}}} \right)^{\frac{1}{4|s|\alpha'}}} \\ & = & A t + AM + \frac{B}{t^p},
\end{eqnarray*}
where
$$ A = d_{TV}(\mu, \gamma_n), t = \log \left( \frac{R}{c^{n(1+n|s|)} d_0^n (2\pi n)^{\frac{n}{2}}} \right), M = \log(c^{n(1+n|s|)} d_0^n (2\pi n)^{\frac{n}{2}}), $$
$$ B = \frac{c n (1+n|s|) \log(\alpha' n)}{(1-2\alpha |s|)^{\frac{1}{\alpha}}} \left(c \max\{\sqrt{n}, \frac{1}{|s|} \} \right)^{\frac{1}{2|s| \alpha'}}, p=\frac{1}{4|s| \alpha'}. $$
Minimizing over $R > c^{n(1+n|s|)} d_0^n (2\pi n)^{\frac{n}{2}}$, or, equivalently, over $t>0$, we have by Lemma \ref{minimize},
\begin{eqnarray*} D(\mu || \gamma_n) \leq A^{\frac{p}{p+1}} B^{\frac{1}{p+1}} \left( p^{\frac{1}{p+1}} + \left( \frac{1}{p} \right)^{\frac{p}{1+p}} \right) + AM,
\end{eqnarray*}
and the result follows using \eqref{bound-d0}.
\end{proof}

We do not know whether Theorem \ref{compar-rel-ent} holds when $s$ is in a neighborhood of $-\frac{1}{2}$ with a rate of convergence independent of $\alpha$. Nonetheless, under an exponential moment assumption and a weaker rate of convergence, one may provide a comparison (dependent on the exponential moment) between more general R\'enyi divergences and total variation distance for $s \in (-\frac{1}{2}, 0)$. This is the aim of the next theorem.

\begin{theorem}\label{compar-Tsal}

Let $s \in (-\frac{1}{2},0)$. Let $\mu$ be an isotropic $s$-concave probability measure on $\R^{n}$, and let $\gamma_{n}$ denote the standard Gaussian distribution on $\R^{n}$. Let $p > 1$ and $\alpha \in (1,2]$. Under the moment assumption
$$ M = \int e^{\frac{|x|^2}{2} \alpha(p-1)} d\mu(x) < +\infty, $$
we have, denoting $d_{TV} = d_{TV}(\mu, \gamma_n)$,
$$ T_{p}(\mu || \gamma_{n}) \leq \frac{\left( c^{n(1+n|s|)} d_0^n n^{\frac{n}{2}} \right)^{p-1}}{p-1} \left[ \sqrt{d_{TV} + d_{TV}^2} + M^{\frac{1}{\alpha}} \left( \frac{c \sqrt{p-1} \max\{\sqrt{n}, \frac{1}{|s|} \}}{\sqrt{\log \left(1 + \frac{1}{d_{TV}} \right)}} \right)^{\frac{1}{2 |s| \alpha'}} \right], $$
where $c>0$ is an absolute constant and $d_0$ is defined in \eqref{c_0-}.

\end{theorem}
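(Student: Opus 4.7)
The plan is to mimic the strategy of Theorem \ref{compar-rel-ent}, replacing the convex function $x \mapsto x \log x$ by $x \mapsto x^p - 1$ and absorbing the large-value regime via the Gaussian moment $M$. Denote by $f$ the density of $\mu$, and set $X = f(Z)/\phi(Z)$ for $Z \sim \gamma_n$ and $W = f(Y)/\phi(Y)$ for $Y \sim \mu$. Since $\E[X] = 1$,
$$ (p-1)\, T_p(\mu || \gamma_n) = \E[X^p] - 1 = \E[X^p - 1]. $$
For a truncation level $R \geq 2$ (to be chosen later), split this into $\E[(X^p-1) 1_{X \leq R}] + \E[(X^p-1) 1_{X > R}]$ and estimate each piece separately.

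For the first piece, the contribution from $\{X < 1\}$ is non-positive, and on $\{1 \leq X \leq R\}$, convexity of $x \mapsto x^p$ gives the secant bound $x^p - 1 \leq \tfrac{R^p-1}{R-1}(x-1) \leq 2 R^{p-1}(x-1)$, where the last inequality uses $R \geq 2$. Combined with $d_{TV}(\mu, \gamma_n) = 2 \E[(X-1)_+]$, this yields
$$ \E[(X^p - 1) 1_{X \leq R}] \leq 2 R^{p-1} \E[(X-1)_+] = R^{p-1}\, d_{TV}(\mu, \gamma_n). $$

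For the second piece, a change of variables gives $\E[(X^p-1) 1_{X > R}] \leq \E[X^p 1_{X > R}] = \E[W^{p-1} 1_{W > R}]$, and by Proposition \ref{max-dens}, $f \leq K := c^{n(1+n|s|)} d_0^n n^{n/2}$, hence $W^{p-1} \leq K^{p-1} \phi(Y)^{1-p}$. Applying H\"older's inequality with exponents $\alpha, \alpha'$,
$$ \E[W^{p-1} 1_{W > R}] \leq K^{p-1} \E[\phi(Y)^{\alpha(1-p)}]^{1/\alpha}\, \P(W > R)^{1/\alpha'}, $$
and the Gaussian moment evaluates to $(2\pi)^{n(p-1)/2} M^{1/\alpha}$ directly from the definition of $M$. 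Since $W > R$ forces $\phi(Y) < K/R$, that is $|Y|^2 > 2 \log(R/((2\pi)^{n/2} K))$, Lemma \ref{deviation} applied to the isotropic $s$-concave $Y$ gives
$$ \P(W > R)^{1/\alpha'} \leq \left( \frac{c\,\max\{\sqrt n, 1/|s|\}}{\sqrt{2 \log(R/((2\pi)^{n/2} K))}} \right)^{1/(2|s|\alpha')}. $$

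Combining the two pieces with the parametrization $R = (2\pi)^{n/2} K \cdot v$, $v \geq 1$, produces a bound of the form
$$ (p-1) T_p(\mu||\gamma_n) \leq \tilde K^{p-1} \Bigl[ v^{p-1} d_{TV} + M^{1/\alpha} \Bigl( \tfrac{c\max\{\sqrt n, 1/|s|\}}{\sqrt{2 \log v}} \Bigr)^{1/(2|s|\alpha')} \Bigr], $$
where $\tilde K = (2\pi)^{n/2} K$. The choice $v = (1 + 1/d_{TV})^{1/(2(p-1))} \geq 1$ is the key balancing: it produces $v^{p-1} d_{TV} = \sqrt{d_{TV}+d_{TV}^2}$, and simultaneously $\sqrt{2 \log v} = \sqrt{\log(1+1/d_{TV})/(p-1)}$, which generates the $\sqrt{p-1}$ factor in the advertised second term. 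Finally, $(2\pi)^{n/2}$ is absorbed into the universal constant via $(2\pi)^{n/2} c^{n(1+n|s|)} \leq (c')^{n(1+n|s|)}$, recovering the stated form. The main bookkeeping obstacle is checking that this optimal $R$ simultaneously satisfies $R \geq 2$ and keeps $\log(R/((2\pi)^{n/2} K))$ positive; both follow from $v \geq 1$ together with a suitable absorption of numerical factors into $K$, and a trivial separate argument handles the corner regime where $d_{TV}$ is close to its maximum value $2$.
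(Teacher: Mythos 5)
Your proof is correct and follows essentially the same route as the paper: the split at level $R$, the secant bound for $x \mapsto x^p - 1$ on $[1,R]$ giving $R^{p-1}d_{TV}$, H\"older's inequality with exponents $(\alpha,\alpha')$ together with Proposition \ref{max-dens} and Lemma \ref{deviation} on the tail piece, and the same choice $R = (2\pi)^{n/2}K\,(1+1/d_{TV})^{1/(2(p-1))}$. The only cosmetic difference is that you bound $W^{p-1} \leq K^{p-1}\phi(Y)^{1-p}$ before applying H\"older rather than after, which is an equivalent ordering of the same two steps.
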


\begin{proof}
Recall the definition of the Tsallis entropy of order $p > 1$ of $Y$ with density $f$ and $Z$ with density $\phi$,
$$ T_{p}(Y || Z) = \frac{1}{p - 1} \left[ \int \frac{f(x)^{p}}{\phi(x)^{p -1}} dx - 1 \right]. $$
Denote, as in the proof of Theorem \ref{compar-rel-ent},
$$ X = \frac{f(Z)}{\phi(Z)}, \quad W = \frac{f(Y)}{\phi(Y)}. $$
We have, for $R \geq 1$,
$$ (p - 1) T_{p}(Y || Z) = \E[X^{p} - 1] \leq \E[(X^{p} - 1)1_{\{1 \leq X \leq R\}}] + \E[(X^{p} - 1)1_{\{X > R\}}]. $$
Note that the function $h(x) = x^{p} - 1$ is convex, and $h(1)=0$, therefore, we have that for all $1 \leq x \leq R$,
$$ h(x) \leq \frac{h(R)}{R-1}(x-1). $$
Hence, for all $R \geq 2$, recalling \eqref{d-TV},
$$ \E[(X^{p} - 1)1_{\{1 \leq X \leq R\}}] \leq 2 \frac{R^{p} - 1}{R} \E[(X-1)1_{\{1 \leq X \leq R\}}] \leq R^{p-1} d_{TV}(\mu, \gamma_n). $$
On the other hand,
$$ \E[X^{p} 1_{\{X > R\}}] = \E[W^{p - 1} 1_{\{W > R\}}] \leq \E[W^{\alpha(p - 1)}]^{\frac{1}{\alpha}} \P(W > R)^{\frac{\alpha - 1}{\alpha}}. $$
Using Proposition \ref{max-dens}, we have for some absolute constant $c>0$, 
$$ \E[W^{\alpha (p - 1)}]^{\frac{1}{\alpha}} = \E \left[ \left( \frac{f(Y)}{\phi(Y)} \right)^{\alpha (p - 1)} \right]^{\frac{1}{\alpha}} \leq \left( c^{n(1+n|s|)} d_0^n n^{\frac{n}{2}} \right)^{p-1} \E[e^{\frac{|Y|^2}{2} \alpha (p - 1)}]^{\frac{1}{\alpha}}, $$
and by \eqref{2nd}, we have for all $R > c^{n(1+n|s|)} d_0^n (2\pi n)^{\frac{n}{2}}$,
$$ \P(W > R)  \leq \left[ \frac{ c \max\{\sqrt{n}, \frac{1}{|s|} \}}{\sqrt{2 \log \left( \frac{R}{c^{n(1+n|s|)} d_0^n (2\pi n)^{\frac{n}{2}}} \right)}} \right]^{\frac{1}{2|s|}}. $$
Hence,
$$ (p - 1) T_{p}(Y || Z) \leq R^{p-1} \, d_{TV}(\mu, \gamma_n) + \frac{A}{\log \left( \frac{R}{c^{n(1+n|s|)} d_0^n (2\pi n)^{\frac{n}{2}}} \right)^{\frac{1}{4 |s| \alpha'}}}, $$
where
$$ A = \left( c^{n(1+n|s|)} d_0^n n^{\frac{n}{2}} \right)^{p-1} \E[e^{\frac{|Y|^2}{2} \alpha (p - 1)}]^{\frac{1}{\alpha}} \left( c \max\{\sqrt{n}, \frac{1}{|s|} \} \right)^{\frac{1}{2|s|\alpha'}}. $$
It remains to choose
$$ R = c^{n(1+n|s|)} d_0^n (2\pi n)^{\frac{n}{2}} \left( 1 + \frac{1}{d_{TV}(\mu, \gamma_n)} \right)^{\frac{1}{2 (p-1)}}. $$
\end{proof}

Let us comment on the moment assumption in Theorem \ref{compar-Tsal}. It was shown in \cite{BCG} that the assumption $T_{p}(Y||Z) < +\infty$, $p>1$, implies that $Y$ has moments of all orders, and in fact
$$ \E[e^{c\frac{|Y|^2}{2}}] < +\infty, $$
for all $c < \frac{p-1}{p}$. On the other hand, note that if $Y$ has a bounded density, which is the case if $Y$ is $s$-concave, then the assumption
$$ \E[e^{\frac{|Y|^2}{2}(p-1)}] < +\infty, $$
implies that $T_{p}(X||Z) < +\infty$.

\vskip1.5cm

\noindent Arnaud Marsiglietti \\
Department of Mathematics \\
University of Florida \\
Gainesville, FL 32611, USA \\
a.marsiglietti@ufl.edu

\vspace{0.8cm}

\noindent Puja Pandey \\
Department of Mathematics \\
University of Florida \\
Gainesville, FL 32611, USA \\
puja.pandey@ufl.edu


\begin{thebibliography}{50}

\bibitem{AGLL} R. Adamczak, O. Gu\'edon, R. Latala, A. Litvak, K. Oleszkiewicz, A. Pajor, N. Tomczak-Jaegermann. Moment estimates for convex measures. Electron. J. Probab. 17 (2012), no. 101, 19 pp.

\bibitem{ABB} S. Artstein, K. M. Ball, F. Barthe, A. Naor. On the rate of convergence in the entropic central limit theorem. Probab. Theory Related Fields, 129(3):381-390, 2004.

\bibitem{BZ} M. F. Balcan, H. Zhang. Sample and Computationally Efficient Learning Algorithms under S-Concave Distributions. Preprint, 2017. arXiv:1703.07758.

\bibitem{Ba} K. Ball. Volume ratios and a reverse isoperimetric inequality. J. London Math. Soc. (2), 44(2):351-359, 1991.

\bibitem{Barr} A. R. Barron. Entropy and the central limit theorem. Ann. Probab., 14(1):336-342, 1986.

\bibitem{Bar} F. Barthe. On a reverse form of the Brascamp-Lieb inequality. Invent. Math. 134 (1998),
335-361.

\bibitem{Bec} A. Beck. First-order methods in optimization. MOS-SIAM Series on Optimization, 25. Society for Industrial and Applied Mathematics (SIAM), Philadelphia, PA; Mathematical Optimization Society, Philadelphia, PA, 2017. xii+475 pp.

\bibitem{Be} L. Berwald. Verallgemeinerung eines Mittelwertsatzes von J. Favard f\"ur positive konkave
Funktionen. (German) Acta Math., 79, (1947). 17-37.

\bibitem{Bob} S. G. Bobkov. Large deviations and isoperimetry over convex probability measures with heavy tails. Electr. J. Probab. 12 (2007), pp. 1072-1100.

\bibitem{BCG} S. G. Bobkov, G. P. Chistyakov, F. G\"otze. R\'enyi divergence and the central limit theorem. Ann. Probab. 47 (2019), no. 1, pp. 270-323.

\bibitem{BM11} S. G. Bobkov, M. Madiman. The entropy per coordinate of a random vector is highly constrained under convexity conditions. IEEE Transactions on Information Theory, vol. 57 (2011), no. 8, pp. 4940-4954. 

\bibitem{BM} S. G. Bobkov, M. Madiman. Reverse Brunn-Minkowski and reverse entropy power inequalities for convex measures, J. Funct. Anal. 262 (2012) no. 7, 3309-3339.

\bibitem{BM20} S. G. Bobkov, A. Marsiglietti. Entropic CLT for smoothed convolutions and associated entropy bounds, International Mathematics Research Notices, vol. 2020, No. 21, 8057-8080, 2020.

\bibitem{Bor73} C. Borell. Complements of Lyapunov's inequality. Math. Ann. 205 (1973), 323-331.

\bibitem{Bor74} C. Borell. Convex measures on locally convex spaces. Ark. Mat., 12:239-252, 1974.

\bibitem{Bor75} C. Borell. Convex set functions in d-space. Period. Math. Hungarica 6 (1975), 111-136.

\bibitem{BH} K.  B\"or\"oczky, D. Hug. Isotropic measures and stronger forms of the reverse isoperimetric inequality. Trans. Amer. Math. Soc. 369 (2017), 6987-7019 .

\bibitem{Bou} J. Bourgain. On high-dimensional maximal functions associated to convex bodies. Amer. J. Math. 108 (1986), no. 6, 1467-1476.

\bibitem{Br} H. Br\'ezis. Propri\'et\'es r\'egularisantes de certains semi-groupes non lin\'eaires. Israel J. Math. 9 (1971) 513-534.

\bibitem{C} I. Csisz\`ar. Information-type measures of difference of probability distributions and indirect observations. Studia Sci. Math. Hungar. 2 (1967), 299-318.

\bibitem{EH} T. van Erven, P. Harremo\"es. R\'enyi divergence and Kullback-Leibler divergence. IEEE Trans.
Inform. Theory 60 (2014), no. 7, 3797-3820.

\bibitem{EK} R. Eldan, B. Klartag. Pointwise Estimates for Marginals of Convex Bodies. J. Functional Analysis, Vol. 254, Issue 8, (2008), 2275-2293.

\bibitem{EMZ} R. Eldan, D. Mikulincer, A. Zhai. The CLT in high dimensions: quantitative bounds via martingale embedding. Preprint. arXiv:1806.09087, 2018.

\bibitem{Fa} J. Favard. Sur les valeurs moyennes. Bull. Sci. Math. (2) 57 (1933), 54-64.

\bibitem{F-hdr} M. Fradelizi. Contributions \`a la g\'eom\'etrie des convexes - M\'ethodes fonctionnelles et probabilistes. Preprint, 2008. Available at: \\ https://perso.math.u-pem.fr/fradelizi.matthieu/pdf/HDR.pdf.

\bibitem{FLM} M. Fradelizi, J. Li, M. Madiman. Concentration of information content for convex measures. Electronic Journal of Probability, 25 (2020) paper no. 20, 22 pp.

\bibitem{FG06} M. Fradelizi, O. Gu\'edon. A generalized localization theorem and geometric inequalities for convex bodies, Advances in Mathematics 204 (2006), 509-529.

\bibitem{FP} P. Frank, G. Pick. Distanzsch\"atzungen im Funktionenraum. I. Math. Ann. 76 (1915), no. 2-3, 354-375.

\bibitem{Gil} G. L. Gilardoni. On Pinsker's and Vajda's type inequalities for Csiszar's f-divergences. IEEE
Trans. Inform. Theory 56 (2010), no. 11, 5377-5386.

\bibitem{Gr} B. Gr\"unbaum. Partitions of mass-distributions and of convex bodies by hyperplanes. Pacific J. Math. 10 (1960), 1257-1261.

\bibitem{H} D. Hensley. Slicing convex bodies - bounds for slice area in terms of the body's covariance.
Proc. Amer. Math. Soc. 79 (1980), No. 4, 619-625.

\bibitem{JTZ} A. Jourani, L. Thibault, D. Zagrodny. Differential properties of the Moreau envelope. (English summary) J. Funct. Anal. 266 (2014), no. 3, 1185-1237.

\bibitem{KPB} S. Karlin, F. Proschan, R. E. Barlow. Moment inequalities of P\'olya frequency functions.
Pacific J. Math. 11 (1961), 1023-1033.

\bibitem{K} B. Klartag. A central limit theorem for convex sets, Invent. Math., Vol. 168, (2007), 91-131.

\bibitem{Le} M. Ledoux. Spectral gap, logarithmic Sobolev constant, and geometric bounds, in: Surveys Differ. Geom., vol. IX, Int. Press, Somerville, MA, 2004, pp. 219-240.

\bibitem{L} L. Leindler, On a certain converse of H\"older's inequality, II, Acta Sci. Math., 33 (1972), 217-223.

\bibitem{Mal} K. Mahler. Ein  \"Ubertragungsprinzip f\"ur konvexe K\"orper. Casopis Pest. Mat. Fys. 68 (1939), 93-102.

\bibitem{MK} A. Marsiglietti, V. Kostina. New connections between the entropy power inequality and geometric inequalities, Proceedings 2018 IEEE International Symposium on Information Theory, Vail, Colorado, June 2018.

\bibitem{MM} E. Meckes, M. Meckes. On the Equivalence of Modes of Convergence for Log-Concave Measures. In Geometric aspects of functional analysis (2011/2013), volume 2116 of Lecture Notes in Math., pages 385-394. Springer, Berlin, 2014.

\bibitem{M1} D. Milman. In\'egalit\'e de Brunn-Minkowski inverse et applications \`a la th\'eorie locale des espaces norm\'es. C. R. Acad. Sci. Paris S\'er. I Math., 302(1):25-28, 1986.

\bibitem{M2} V. D. Milman. Isomorphic symmetrizations and geometric inequalities. In Geometric aspects of functional analysis (1986/87), volume 1317 of Lecture Notes in Math., pages 107-131. Springer, Berlin, 1988.

\bibitem{Mo} J. J. Moreau. Proximit\'e et dualit\'e dans un espace hilbertien. Bull. Soc. Math. France 93 (1965) 273-299.

\bibitem{Pin} M. S. Pinsker. Information and information stability of random variables and processes. Translated and edited by Amiel Feinstein Holden-Day, Inc., San Francisco, Calif.-London-Amsterdam, 1964, xii+243 pp.

\bibitem{P} G. Pisier. The volume of convex bodies and Banach space geometry, volume 94 of Cambridge Tracts in Mathematics. Cambridge University Press, Cambridge, 1989.

\bibitem{Tal} M. Talagrand. Transportation cost for Gaussian and other product measures. Geom. Funct. Anal. 6 (1996), no. 3, 587-600.

\bibitem{V} C. Villani. Optimal Transport: Old and New. Volume 338 of Grundlehren der Mathematischen Wissenschaften [Fundamental Principles of Mathematical Sciences]. Springer-Verlag, Berlin, 2009.


\end{thebibliography}
\end{document}